\newtheorem{lemma}{Lemma}[section]
\newtheorem{prop_intro}{Proposition}
\newtheorem{quest_intro}[prop_intro]{Question}
\newtheorem{thm_intro}[prop_intro]{Theorem}
\newtheorem{cor_intro}[prop_intro]{Corollary}
\newtheorem*{var@theorem}{\var@title}
\newcommand{\newvartheorem}[2]{%
	\newenvironment{var#1}[1]{%
		\def\var@title{#2 \ref{##1}}%
		\begin{var@theorem}}%
			{\end{var@theorem}}}
\theoremstyle{definition}
\newtheorem{defn}[lemma]{Definition}
\newtheorem{rmk}[lemma]{Remark}
\newtheorem*{prop*}{Proposition}
\DeclareMathOperator\out{Out}
\DeclareMathOperator\inn{Inn}
\DeclareMathOperator{\aut}{{\mathrm{Aut}}}
\DeclareMathOperator\isom{Isom}
\DeclareMathOperator\mcg{MCG}
\renewcommand{\phi}{\varphi}
\newcommand{\calC}{{C\hspace{-0.15em}H}_b^2(\Gamma)}
\newcommand{\Hom}{\mathrm{Hom}}
\newcommand{\str}{\mathrm{str}}
\newcommand{\N}{\ensuremath {\mathbb{N}}}
\newcommand{\R}{\ensuremath {\mathbb{R}}}
\newcommand{\Z}{\ensuremath {\mathbb{Z}}}
\newcommand{\Hp} {\ensuremath {\mathbb{H}^2}}
\newcommand{\bi} {{\partial_\infty}}
\newcommand{\acts}{\curvearrowright}
\newcommand\restr[2]{{\left.\kern-\nulldelimiterspace #1 \vphantom{\big|} \right|_{#2} }}
\newcommand{\eqs}{{\tilde{\tau}_\sigma}}
\newcommand{\eqsb}{{\eta}}
\title[]{The action of mapping class groups on de Rham quasimorphisms}
\keywords{}
\author[]{Giuseppe Bargagnati}
\address{Dipartimento di Matematica, Università di Pisa, Italy}
\email{giuseppe.bargagnati@phd.unipi.it}
\author[]{Federica Bertolotti}
\address{Scuola Normale Superiore, Pisa, Italy}
\email{federica.bertolotti@sns.it}
\author[]{Pietro Capovilla}
\address{Scuola Normale Superiore, Pisa, Italy}
\email{pietro.capovilla@sns.it}
\author[]{Francesco Milizia}
\address{Scuola Normale Superiore, Pisa, Italy}
\email{francesco.milizia@sns.it}
\begin{document}

\begin{abstract}
	We study the action of the mapping class group on the subspace of de Rham classes in the degree-two bounded cohomology of a hyperbolic surface. In particular, we show that the only fixed nontrivial finite-dimensional subspace is the one generated by the Euler class.
	As a consequence, we get that the action of the mapping class group on the space of de Rham quasimorphisms has no fixed points.
\end{abstract}

\maketitle

\section{Introduction}


Let $\Gamma$ be a group. A quasimorphism on $\Gamma$ is a map $f:\Gamma \rightarrow \mathbb{R}$ for which there is a $D\in \R_{\ge 0}$ such that
\[|f(g)+f(h)-f(gh)|\leq D\]
for every $g,h \in \Gamma$.
A quasimorphism is \emph{homogeneous} if it restricts to a homomorphism on every cyclic subgroup of $\Gamma$.
The first examples of nontrivial quasimorphisms were constructed for free groups by Brooks \cite{Bro81}. Notice that a quasimorphism is considered \emph{trivial} if it is at bounded distance from a homomorphism.

The group of automorphisms of a group acts naturally on the space of (homogeneous) quasimorphisms by precomposition. In 2010, Mikl\'os Ab\'ert asked whether for $n \in \N_{\geq 2}$ there exist nonzero $\aut$-invariant homogeneous quasimorphisms on the free group with $n$ generators $F_n$; he comments ``probably not'' \cite[Question 47]{Abe10}. In \cite{Has18} Hase proved that the action of $\aut(F_n)$ on the space of Brooks quasimorphisms (which is invariant under the action of $\aut(F_n)$) has no nontrivial fixed points. On the other hand, in \cite{BB19} Brandenbursky and Marcinkowski answered this question affirmatively for $n=2$. Finally, in \cite{FFW23} Fournier-Facio and Wade proved that for a large class of groups (which includes non-elementary Gromov hyperbolic groups and infinitely-many-ended groups) there exists an infinite-dimensional space of $\aut$-invariant homogeneous quasimorphisms.
However, their construction is not very explicit;
therefore, it makes sense to restrict to some particular $\aut$-invariant subspaces of quasimorphisms --- coming, for example, from geometric or combinatorial constructions --- and ask whether they contain nontrivial fixed points or not.

We focus our attention on \emph{de Rham quasimorphisms}, which are considered by Calegari in \cite[Subsection 4.2]{scl} and defined as follows.
Let $\Sigma$ be an oriented closed connected surface of genus at least two and denote by $\Gamma$ its fundamental group.
Let $\alpha \in \Omega^1(\Sigma)$ be a $1$-form and $m$ be a hyperbolic metric on $\Sigma$.
For every $\gamma \in \Gamma\setminus\{1\}$, we denote by $\rho^m_{\gamma}$ the \emph{free} oriented closed geodesic (with respect to the metric $m$) in the homotopy class of $\gamma$. Then, the (homogeneous) de Rham quasimorphism associated to $m$ and $\alpha$ is defined by setting
\[q^m_\alpha(\gamma)= \int_{\rho^m_{\gamma}}\alpha\]
for every $\gamma \in \Gamma\setminus\{1\}$, and $q^m_\alpha(1) = 0$.
A quasimorphism on $\Gamma$ is a de Rham quasimorphism if it is of the form $q^m_\alpha$ for some $m$ and $\alpha$ as above.

The vector space generated by de Rham quasimorphisms is $\aut(\Gamma)$\nobreakdash-\hspace{0pt}invariant (see Section \ref{sec:qm vs bc}).
We prove the following result.
\begin{thm_intro}
	\label{thm: invariant_quasi_2}
	The action of $\aut(\Gamma)$ on the space of quasimorphisms generated by de Rham quasimorphisms has just one nonzero finite-\hspace{0pt}dimensional invariant subspace, that is the one consisting of homomorphisms.
\end{thm_intro}

Notice that $\Hom(\Gamma,\R)$ has dimension $2g$, where $g$ is the genus of the surface.
In particular, since there are no $1$-dimensional $\aut(\Gamma)$-invariant subspaces, we deduce the following corollary.
\begin{cor_intro}
	\label{cor: invariant continuous quasimorphismsintro}
	The action of $\aut(\Gamma)$ on the space of quasimorphisms generated by de Rham quasimorphisms has no nonzero fixed points.
\end{cor_intro}
\begin{rmk}
	Our results should be read as the analogue for surface groups of Hase's ones \cite{Has18} for free groups.
\end{rmk}
\begin{rmk}
	The result by Fournier-Facio and Wade (\cite[Theorem B]{FFW23}), together with Corollary \ref{cor: invariant continuous quasimorphismsintro}, readily implies that there is an infinite-dimensional subspace of homogeneus quasimorphisms that cannot be described as (a linear combination of) de Rham quasimorphisms. 
\end{rmk}
In order to establish Theorem \ref{thm: invariant_quasi_2} we adopt the point of view of bounded cohomology, which in degree two is strictly related to quasimorphisms.
Bounded cohomology of groups was introduced by Johnson in \cite{Joh72}, where he proved that it vanishes in all positive degrees for amenable groups. Pioneering works in this area were those of Gromov \cite{Gro82} and Ivanov \cite{Iva87}, where bounded cohomology of topological spaces was also introduced. Since then, it has played a fundamental role in various fields of mathematics, such as geometric group theory \cite{Min02}, simplicial volume \cite{Gro82, Thurston79}, circle actions \cite{Ghy01}, rigidity theory \cite{burger2002continuous} and stable commutator length \cite{scl}.

The connection with quasimorphisms comes from the fact that the space of homogeneous quasimorphisms on $\Gamma$ modulo homomorphisms canonically injects into the second bounded cohomology $H^2_b(\Gamma)$.
The image of this map coincides with the set of bounded classes that vanish in standard cohomology, which are also called \emph{exact bounded cohomology classes} (e.g., \cite[Section 2.3]{frigerio2017bounded}).
The group $\aut(\Gamma)$ acts naturally on $H^2_b(\Gamma)$, and the just mentioned injection is equivariant with respect to this action.

From this point of view, de Rham quasimorphisms correspond to \emph{de Rham classes}, which are bounded cohomology classes defined via differential $2$-forms and hyperbolic metrics on $\Sigma$: in 1988, Barge and Ghys \cite{bargeghys} proved that, given a hyperbolic metric $m$ on $\Sigma$, the second bounded cohomology of $\Sigma$ contains an infinite-dimensional subspace given by differential 2-forms on $\Sigma$. Namely, they showed that the map
\[\Psi_m: \Omega^2(\Sigma)\rightarrow H^2_b(\Sigma),\]
defined by integrating differential forms over geodesic triangles (with respect to the metric $m$), is injective (see Section \ref{Sec:BCDF} for the precise definition).
We call \emph{de Rham classes} the bounded cohomology classes of $H^2_b(\Sigma)$ (or, equivalently, of $H^2_b(\Gamma)$) which lie in the image of $\Psi_m$ for some hyperbolic metric $m$.
These classes have been intensively studied, also in recent years \cite{marasco2022trivial, BFMSS22, MB22, Mar23}.
We refer to Section \ref{sec:qm vs bc} for a detailed description of the relationship between de Rham quasimorphisms and de Rham classes.
Notice that the Euler class of $\Sigma$ is a multiple of the de Rham class corresponding to any metric and its corresponding volume form.


Since we are dealing with a geometric construction, it is helpful to rethink the action of $\aut(\Gamma)$ in more geometric terms.
First, recall that inner automorphisms act trivially on (bounded) cohomology \cite[Section II.6]{browncohomology}.
Therefore, the action of $\aut(\Gamma)$ factors through an action of $\out(\Gamma) = \aut(\Gamma)/\inn(\Gamma)$.
On the other hand, 
a celebrated theorem by Dehn, Nielsen and Baer affirms that the (positive) mapping class group, denoted by $\mcg(\Sigma)$, is naturally isomorphic to a subgroup of index 2 of $\out(\Gamma)$.
Therefore, $\mcg(\Sigma)$ acts on bounded cohomology.
Go to Section \ref{Sec:mcgonH2b} for a description of this action, which clarifies also why the subspace of $H^2_b(\Sigma)\cong H^2_b(\Gamma)$ generated by the de Rham classes is invariant under the action of the mapping class group.

We prove the following result.
\begin{thm_intro}
	\label{thm: invariant de Rham}
	The subspace of $H^2_b(\Gamma)$ generated by de Rham classes has only two finite-dimensional subspaces preserved by $\mcg(\Sigma)$: the trivial one and the $1$-dimensional subspace generated by the Euler class.
\end{thm_intro}

In order to prove the previous theorem, we study the action of $\mcg(\Sigma)$ on a bigger subspace of $H^2_b(\Gamma)$, which we denote by $\calC$; it is still $\mcg(\Sigma)$-invariant and contains all the de Rham classes (see Subsection \ref{sec: bc as meas fun} for the definition).
In this context, we prove the following theorem, of which Theorem \ref{thm: invariant de Rham} is an immediate corollary.

\begin{thm_intro}
	\label{thm: invariant continuous subspacesintro}
	The subspace generated by the Euler class is the only finite-dimensional nontrivial subspace preserved by the action $\mcg(\Sigma) \acts \calC$.
\end{thm_intro}
\begin{rmk}
	By translating \cite[Theorem B]{FFW23} in the bounded cohomology framework, one obtains that $H^2_b (\Sigma)$ has an infinite-dimensional subspace consisting of points fixed by the action of $\mcg(\Sigma)$. In particular, the map $\Psi_m$ is \emph{highly nonsurjective}, in the sense that there is an infinite-dimensional subspace of $H_b^2(\Sigma)$ not contained in the image of $\Psi_m$.
\end{rmk}
It is not clear to us how the image $\Psi_m(\Omega^2(\Sigma))$ depends on the chosen hyperbolic metric $m$.
In particular, even if the space of de Rham classes (varying the hyperbolic metric) is $\mcg(\Sigma)$-invariant, we do not know if, keeping fixed the hyperbolic metric $m$, the subspace $\Psi_m(\Omega^2(\Sigma))$ is invariant under the action of $\mcg(\Sigma)$.

\begin{quest_intro}
	\label{que: dependence on metric}
	How does the subspace of $H^2_b(\Sigma)$ given by $\Psi_m(\Omega^2(\Sigma))$ depend on the chosen hyperbolic metric $m$ on $\Sigma$?
\end{quest_intro}

\begin{quest_intro}
	\label{que: invarinace on metric}
	Let $m$ be a hyperbolic metric on $\Sigma$. Is the subspace $\Psi_m(\Omega^2(\Sigma))$ of $H^2_b(\Gamma)$ invariant under the action of $\mcg(\Sigma)$?
\end{quest_intro}

\subsection*{Plan of the paper}
In Section \ref{Sec:BCDF}, we recall the definition of bounded cohomology of groups and spaces and define de Rham classes following \cite{bargeghys}. Then we introduce the space $\calC$ and show that it contains every de Rham class. In Section \ref{Sec:mcgonH2b} we describe how the action of the mapping class group on $H^2_b(\Sigma)$ descends to an action on $\calC$, focusing in particular on the action of Dehn twists. We devote Section \ref{Sec:proof} to the proof of Theorem \ref{thm: invariant continuous subspacesintro}. Finally, in Section \ref{sec:qm vs bc} we explain the relationship between de Rham quasimorphisms and de Rham classes and we prove Theorem \ref{thm: invariant_quasi_2} and Corollary \ref{cor: invariant continuous quasimorphismsintro}.

\subsection*{Acknowledgments}
We are grateful to our supervisor Roberto Frigerio  for helpful discussions.
We thank Francesco Fournier-Facio for suggesting us this problem.
We thank Francesco Bonsante, Gabriele Mondello and Maria Beatrice Pozzetti for useful comments.

Moreover, G.B., F.B.\ and F.M.\ have been supported by the INdAM GNSAGA Project, CUP E55F22000270001.

\section{Bounded cohomology and differential forms}\label{Sec:BCDF}

\subsection{Bounded cohomology of groups}
Let $\Gamma$ be a discrete group.
The \emph{real bound\-ed cohomology} of $\Gamma$, denoted by $H^\bullet_b(\Gamma)$, is defined as the cohomology of the following complex of vector spaces:
\[
	0 \rightarrow C^0_{b}(\Gamma)^\Gamma \rightarrow C^1_{b}(\Gamma)^\Gamma \rightarrow C^2_{b}(\Gamma)^\Gamma \rightarrow \cdots\,,
\]
where $C^n_{b}(\Gamma)^\Gamma$ denotes the space of set-theoretic bounded $\Gamma$-invariant maps from $\Gamma^{n+1}$ to $\R$, and the differential maps are defined by $(\delta f)(\gamma_0,\dots,\gamma_n) = \sum_{i=0}^n(-1)^if(\dots,\hat{\gamma_i},\dots)$.
Here, the action of $\Gamma$ on functions is defined by $(\gamma\cdot f)(\gamma_0,\dots,\gamma_n)=f(\gamma^{-1}\gamma_0,\dots, \gamma^{-1}\gamma_n)$.

The group of automorphisms of $\Gamma$ naturally acts of $C^n_b(\Gamma)$: for every $\varphi \in \aut(\Gamma)$ and every $f \in C^n_b(\Gamma)$ we set
\[
	(\varphi \cdot f) (g_0,\dots,g_n) = f(\varphi^{-1}(g_0),\dots, \varphi^{-1}(g_n)).
\]
Since $\aut(\Gamma)$ acts on $C^\bullet_b(\Gamma)$ by $\Gamma$-equivariant chain maps, we get a well-defined action $\aut(\Gamma)\acts H^\bullet_b(\Gamma)$.
Moreover, being trivial on inner automorphisms \cite[Section II.6]{browncohomology}, this action factors through $\out(\Gamma)$.

\subsection{Bounded cohomology of spaces}
Let $X$ be a topological space. The \emph{real bounded cohomology} $H^\bullet_b(X)$ of $X$ is the cohomology of the complex
\[
	0\rightarrow C^0_b(X)\rightarrow C^1_b(X)\rightarrow C^2_b(X)\rightarrow\cdots \,,
\]
where $C^n_{b}(X)$ denotes the space of bounded singular cochains of $X$, the differential maps are the restrictions of the boundary maps of the usual singular cochain complex $C^\bullet(X)$ and a cochain $f \in C^n(X)$ is called \emph{bounded} if
\[
	\lVert f \rVert_\infty =
	\sup \bigl\{ | f(\sigma) |, \; \sigma \mbox{ is a singular $n$-simplex}
	\bigr\} < \infty .
\]

\subsection{Bounded cohomology and differential forms}\label{sub:derham clas}
From now on, $\Sigma$ will be a closed oriented surface of genus at least two.
We fix a base point $p \in \Sigma$ and let $\Gamma =\pi_1(\Sigma,p)$ be the fundamental group.
We also fix a base point $\tilde p$ in the universal cover of $\Sigma$, projecting to $p\in\Sigma$ via the universal covering map, so that $\Gamma$ can be identified to the group of deck transformations.

Let $m$ be a hyperbolic metric on $\Sigma$; the universal cover, endowed with the pull-back metric, can be isometrically identified with the hyperbolic plane $\Hp$ and, accordingly, $\Gamma$ can be seen as a subgroup of $\isom(\Hp)$.
The hyperbolic metric allows us to perform a straightening procedure of singular simplices: 
if $\tilde{s}$ is a singular simplex of $\Hp$, its straightening $\str(\tilde{s})$ is a \emph{geodesic} singular simplex in $\Hp$ having the same vertices as $\tilde{s}$ (for the details see, e.g., \cite[Section 6.1]{Thurston79} and \cite[Section 8.4]{frigerio2017bounded}).

Let $\Omega^2(\Sigma)$ be the space of differential $2$-forms on $\Sigma$.
Every $\omega \in \Omega^2(\Sigma)$ defines a singular $2$-cochain $c_\omega \in C^2(\Sigma)$ in the following way: for every singular $2$-simplex $s$, $c_\omega(s)$ is the integral of $\tilde{\omega}$ on the straightening of any lift of $s$, where $\tilde{\omega}$ denotes the pull-back of $\omega$ to the universal cover.

Since the area of geodesic triangles in $\Hp$ is bounded from above by $\pi$, $c_\omega$ actually defines a \emph{bounded} singular $2$-cochain $c_\omega \in C^2_b(\Sigma)$.
It was proven by Barge and Ghys in \cite{bargeghys} that $c_\omega$ is actually a cocycle and that the map
\[\begin{aligned}
		\Psi_m \colon \Omega^2(\Sigma)\rightarrow & H^2_b(\Sigma) \\ \omega \mapsto &[c_\omega]
	\end{aligned}\]
is injective.
We are keeping the metric $m$ as a subscript, since it plays an important role in the definition of the map.

\begin{defn}
	We call \emph{de Rham classes} the cohomology classes in $H^2_b(\Sigma)$ of the form $\Psi_m(\omega)$, for some hyperbolic metric $m$ and $\omega \in \Omega^2(\Sigma)$.
\end{defn}
Consider the map $\theta \colon C^2_b(\Sigma) \rightarrow C^2_b(\Gamma)^\Gamma$ defined by the formula
\[(\theta(f))(\gamma_0,\gamma_1,\gamma_2)=\tilde{f}(\str(\gamma_0\cdot \tilde p,\gamma_1\cdot \tilde p,\gamma_2\cdot \tilde p)),\]
where $\tilde{f} \in C^2_b(\Hp)^\Gamma\cong C^2_b(\Sigma)$ denotes the $\Gamma$-invariant lift of $f\in C^2_b(\Sigma)$ to $\Hp$, and $\str(\gamma_0\cdot \tilde p,\gamma_1\cdot \tilde p,\gamma_2\cdot \tilde p)$ denotes the geodesic singular triangle of $\Hp$ with vertices $\gamma_0\cdot \tilde p,\gamma_1\cdot \tilde p,\gamma_2\cdot \tilde p \in \Hp$.
It is well known that $\theta$ induces an isomorphism in bounded cohomology and that this isomorphism is independent of the chosen metric (see \cite[Corollary 4.15]{frigerio2017bounded}); for this reason, this time we are omitting $m$ in the notation (we denote by $\theta$ also the isomorphism in cohomology).
The classes in $H^2_b(\Gamma)$ corresponding to the de Rham classes via the isomorphism $\theta$ will be also called \emph{de Rham classes}.


The linear subspace $\theta(\Psi_m(\Omega_2(\Sigma))) \subseteq H^2_b(\Gamma)$ can also be described, as done by Barge and Ghys, by integrating 2-forms on ideal triangles in $\Hp$.
Recall that, as $\Gamma$ acts by isometries on $\Hp$, it also acts on the boundary at infinity $\bi \Hp \cong S^1$ by homeomorphisms.
Fix $\xi \in \bi \Hp$ and, for $\omega \in \Omega^2(\Sigma)$, let $\tilde{\omega} \in \Omega^2(\Hp)$ be the pull-back of $\omega$ to $\Hp$.
Define $c_{\omega, \xi} \in C^2_b(\Gamma)^\Gamma$ by setting
\begin{equation}\label{eq:cociclo al bordo}
	c_{\omega,\xi}(\gamma_0,\gamma_1,\gamma_2)=
	\int_{\str(\gamma_0\cdot \xi, \gamma_1\cdot \xi, \gamma_2\cdot \xi)}\tilde \omega\
\end{equation}
for every $\gamma_0,\gamma_1,\gamma_2\in \Gamma$, where $\str(\xi_0, \xi_1, \xi_2)\subset\Hp$ denotes the oriented ideal triangle with vertices $\xi_0, \xi_1, \xi_2 \in \bi \Hp$. Here, by oriented we mean that the ideal simplex $\str(\xi_0, \xi_1, \xi_2)$ is furnished with the same (resp.\ opposite) orientation of $\Hp$ whenever the triple $(\xi_0, \xi_1, \xi_2)$ is ordered counter-clockwise (resp.\ clockwise) on $S^1$. 
If $\gamma_i \cdot \xi =\gamma_j \cdot \xi$ for some $i\neq j$, then the ideal simplex is degenerate, thus $c_{\omega,\xi}(\gamma_0,\gamma_1,\gamma_2)= 0$.

In the following lemma, the cochains $\theta(c_\omega)$ and $c_{\omega, \xi}$ in $C^2_b(\Gamma)^\Gamma$ are defined using the same hyperbolic metric $m$.
\begin{lemma}[{\cite[Lemma 3.10]{bargeghys}}]
	\label{lem: barge and ghys}
	For every $\omega \in \Omega^2(\Sigma)$ and $\xi \in \bi\Hp$, the cochain $\theta(c_\omega)$ is cohomologous to $c_{\omega, \xi}$, therefore they define the same class in $H^2_b(\Gamma)$.
\end{lemma}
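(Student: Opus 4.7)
The plan is to exhibit an explicit bounded $\Gamma$-invariant $1$-cochain $b \in C^1_b(\Gamma)^\Gamma$ with $\delta b = \theta(c_\omega) - c_{\omega,\xi}$. Writing $p_i = \gamma_i \cdot \tilde p \in \Hp$ and $\xi_i = \gamma_i \cdot \xi \in \bi \Hp$, I would set
\[
  b(\gamma_0,\gamma_1) = \int_{\str(p_0,p_1,\xi_1)} \tilde\omega + \int_{\str(p_0,\xi_1,\xi_0)} \tilde\omega,
\]
i.e.\ the integral of $\tilde\omega$ over the geodesic quadrilateral with finite vertices $p_0, p_1$ and ideal vertices $\xi_0, \xi_1$, which geometrically interpolates between the finite edge from $p_0$ to $p_1$ and the ideal geodesic from $\xi_0$ to $\xi_1$.

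Boundedness and $\Gamma$-invariance are essentially free. Every hyperbolic triangle, possibly with some or all vertices on $\bi\Hp$, has area at most $\pi$; since the universal covering map $\Hp \to \Sigma$ is a local isometry and $\Sigma$ is compact, $\|\tilde\omega\|_\infty = \|\omega\|_\infty < \infty$, hence $|b| \leq 2\pi\|\omega\|_\infty$. Moreover $\Gamma$ acts by isometries on $\Hp \cup \bi\Hp$ and preserves $\tilde\omega$, so translating all four vertices of the quadrilateral by the same element of $\Gamma$ leaves the integral unchanged.

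The heart of the argument is the coboundary computation. Expanding $\delta b(\gamma_0,\gamma_1,\gamma_2) = b(\gamma_1,\gamma_2) - b(\gamma_0,\gamma_2) + b(\gamma_0,\gamma_1)$ yields a signed sum of six integrals over straightened $2$-simplices with mixed finite/ideal vertices, which must be matched against $\theta(c_\omega)(\gamma_0,\gamma_1,\gamma_2) - c_{\omega,\xi}(\gamma_0,\gamma_1,\gamma_2)$. My plan is to do this via the auxiliary straightened $3$-chain
\[
  D = \str(p_0,p_1,p_2,\xi_2) - \str(p_0,p_1,\xi_1,\xi_2) + \str(p_0,\xi_0,\xi_1,\xi_2),
\]
i.e.\ the standard three-tetrahedron triangulation of the ``geodesic prism'' on $\{p_0,p_1,p_2,\xi_0,\xi_1,\xi_2\}$. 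A direct expansion of $\partial D$ via the simplicial boundary formula shows that $\delta b(\gamma_0,\gamma_1,\gamma_2) - \theta(c_\omega)(\gamma_0,\gamma_1,\gamma_2) + c_{\omega,\xi}(\gamma_0,\gamma_1,\gamma_2)$ equals $\int_{\partial D} \tilde\omega$; by Stokes this is $\int_D d\tilde\omega = 0$, since $d\tilde\omega$ is a $3$-form on the $2$-dimensional manifold $\Hp$ and therefore vanishes identically.

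The only step requiring real care is the bookkeeping in the expansion of $\partial D$ and its matching against the six terms produced by $\delta b$; this is a purely combinatorial manipulation using antisymmetry of the simplicial brackets, and once set up correctly all the cancellations are automatic. Degenerate configurations (e.g.\ when two of the $\xi_i$ coincide) require no separate treatment, since straightened simplices with repeated vertices contribute $0$ to every integral, consistently with the convention built into the definition of $c_{\omega,\xi}$.
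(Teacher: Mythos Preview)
The paper does not prove this lemma; it simply cites \cite[Lemma 3.10]{bargeghys}. Your argument is precisely the classical prism (``cone'') construction that Barge and Ghys use, and the paper itself later alludes to ``the same prism construction as in \cite[proof of Lemma 3.9]{bargeghys}'' when proving the closely related Lemma~\ref{lem: deRham qm are continuous}. So your approach is exactly the expected one, and the combinatorics of $\partial D$ match up correctly with $\delta b - \theta(c_\omega) + c_{\omega,\xi}$.

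One small point you gloss over: some of the $3$-simplices in $D$ have ideal vertices, so they are not maps from a compact simplex into $\Hp$, and invoking Stokes requires a word of justification. The standard fix is to replace each $\xi_i$ by a point $q_i \in \Hp$ on the geodesic ray from $\tilde p$ to $\xi_i$, apply Stokes to the resulting finite prism (where everything is honest), and then let $q_i \to \xi_i$; all face integrals converge because $\tilde\omega$ is bounded and hyperbolic triangles have uniformly bounded area, so the identity survives in the limit. Alternatively, one can observe directly that the signed sum of integrals of a bounded $2$-form over the four faces of any (possibly ideal) geodesic ``tetrahedron'' in $\Hp$ vanishes, since the faces tile the planar convex hull with cancelling multiplicities. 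Either way this is routine, but worth a sentence.
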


\subsection{Bounded cohomology 2-classes as measurable functions}\label{sec: bc as meas fun}
Let now $m_0$ be a hyperbolic metric on $\Sigma$, and choose an isometry between the universal cover of $\Sigma$ and $\Hp$, so that $\Gamma$ acts on $\bi \Hp \cong S^1$ by homeomorphisms.
We are using a different notation for $m_0$, since this metric will serve a different purpose compared to the metric $m$ of the previous subsection.
If we equip $S^1$ with the Lebesgue probability measure (on the $\sigma$-algebra of Borel subsets)
the action $\Gamma\curvearrowright S^1$ is doubly ergodic \cite[Example 4]{burger2002continuous}. Hence, we have an identification \cite[Theorem 2]{burger2002continuous}
\begin{equation}
	\label{eq: bc is measurable functions}
	H_b^2(\Gamma) \cong ZL^\infty_{\rm alt}\left(S^1\times S^1 \times S^1 \right)^\Gamma,
\end{equation}
where $ZL^\infty_{\rm alt}\left(S^1\times S^1 \times S^1 \right)$ is the space of $\Gamma$-invariant alternating measurable
bounded cocycles (up to equality almost everywhere) from $S^1\times S^1 \times S^1$ to $\R$. Here, by alternating we mean that if we permute the entries of the cocycle, then the sign of the output changes according to the sign of the permutation.
Again, the superscript $\Gamma$ denotes taking the space of $\Gamma$-invariant cochains with respect to the diagonal action on $S^1 \times S^1 \times S^1$.

\begin{rmk}\label{rmk:f_classes}
	The elements of $ZL^\infty_{\rm alt}\left(S^1\times S^1 \times S^1 \right)$ are not functions, but \emph{classes} of functions.
	This being said, we will occasionally also write $f \in ZL^\infty_{\rm alt}\left(S^1\times S^1 \times S^1 \right)^\Gamma$ with $f$ a function, meaning that it represents a class that belongs to $ZL^\infty_{\rm alt}\left(S^1\times S^1 \times S^1 \right)^\Gamma$.
	Notice that such an $f$ is not necessarily $\Gamma$-invariant as a function, even if its class is, and the cocycle condition $\delta f = 0$, in general, only holds almost everywhere.
\end{rmk}

\begin{defn}
	Let $\Delta \subset S^1 \times S^1 \times S^1$ be the multidiagonal, consisting of triples in which at least two points coincide.
	We denote by $\calC$ the linear subspace of $H_b^2(\Gamma) \cong ZL^\infty_{\rm alt}\left(S^1\times S^1 \times S^1 \right)^\Gamma$ whose elements can be represented by cocycles which are continuous on $(S^1 \times S^1 \times S^1)\setminus\Delta$.
\end{defn}

Since 
$\Delta$ has measure $0$, and since two continuous functions on $(S^1)^3\setminus \Delta$ are equal if and only if they coincide almost everywhere, the space $\calC$ can be identified with
\[\{f \colon (S^1)^3\setminus\Delta \to \R \text{ alternating } \Gamma \text{-invariant continuous bounded cocycle}\}.\]

An example of an element inside $\calC$ is the \emph{orientation cocycle} 
\[\mathrm{Or} \colon (S^1)^3\setminus\Delta \to \{-1,+1 \} ,\]
which assigns $+1$ to every positively oriented triple (i.e., ordered counter-clockwise) and $-1$ to every negatively oriented triple.
Being a multiple of the orientation cocycle, also the \emph{Euler class} of $\Sigma$ lies in $\calC$.

Since the action of $\Gamma$ on $\Hp$ (and therefore on $\bi \Hp\cong S^1$) depends on the chosen hyperbolic metric $m_0$ and on the chosen isometry between $\Hp$ and the universal cover of $\Sigma$, the space $\calC$ could also \emph{a priori} depend on this data.
In the following lemma, we show that this is not the case.

\begin{lemma}
	\label{lem: indipendence of calC from metric}
	The space $\calC$, understood as a linear subspace of $H^2_b(\Gamma)$, is independent of the choice of the metric $m_0$ and the isometry between $\Hp$ and the universal cover of $\Sigma$.
\end{lemma}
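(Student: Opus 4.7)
The plan is to show that any two pairs $(m_0, \iota_0)$ and $(m_0', \iota_0')$ (each consisting of a hyperbolic metric together with an isometry of the universal cover with $\Hp$) induce the same subspace $\calC \subseteq H^2_b(\Gamma)$. Denote by $\rho, \rho' \colon \Gamma \to \mathrm{Homeo}(S^1)$ the corresponding boundary actions. The geometric heart of the argument is the existence of a $\Gamma$-equivariant homeomorphism $\phi \colon S^1 \to S^1$ intertwining $\rho$ and $\rho'$: the identity of $\Sigma$ is bi-Lipschitz between $m_0$ and $m_0'$ (by compactness), hence lifts to a $\Gamma$-equivariant quasi-isometry $(\Hp, m_0) \to (\Hp, m_0')$, which by the standard behaviour of quasi-isometries of Gromov hyperbolic spaces extends uniquely to a $\Gamma$-equivariant homeomorphism of the two boundary circles, both identified with $S^1$ via $\iota_0$ and $\iota_0'$.

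Next I would establish the naturality of the Burger--Iozzi identification \eqref{eq: bc is measurable functions} under $\phi$. Writing $I_\rho, I_{\rho'}$ for the two identifications, the goal is to check that
\[
I_\rho \;=\; (\phi\times\phi\times\phi)^* \circ I_{\rho'},
\]
where the pullback is well-defined because $\phi$ intertwines $\rho$ and $\rho'$. This is the general naturality of the $L^\infty$-cochain resolution of $H^2_b(\Gamma)$ by an amenable doubly ergodic boundary: a $\Gamma$-equivariant measurable isomorphism between two such boundaries induces, via pullback, a chain map between the two resolutions that realises the identity on $H^2_b$. Concretely, one can factor both $I_\rho$ and $I_{\rho'}$ through a common metric-independent model (for instance, through the straightening isomorphism $\theta$ discussed before Lemma \ref{lem: barge and ghys}) and observe that the resulting triangle commutes up to cochain homotopy.

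Given these two ingredients, the statement follows. Since $\phi$ is a homeomorphism that sends $\Delta$ to $\Delta$, the pullback $(\phi\times\phi\times\phi)^*$ restricts to a bijection between continuous alternating bounded cocycles on $(S^1)^3 \setminus \Delta$ for the two actions. Combined with the naturality above, this implies that a class in $H^2_b(\Gamma)$ admits a continuous representative in the $\rho$-model if and only if it does in the $\rho'$-model, which is exactly the independence claimed. The main obstacle I anticipate is the second step: pinning down that the natural isomorphism between the two $L^\infty$-resolutions is exactly $(\phi\times\phi\times\phi)^*$ requires either invoking the functoriality baked into the Burger--Iozzi machinery (uniqueness of maps between relatively injective resolutions lifting the identity on $\R$) or producing an explicit chain-level comparison through a common metric-independent resolution.
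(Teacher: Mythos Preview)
Your overall strategy coincides with the paper's --- obtain a $\Gamma$-equivariant boundary homeomorphism $\phi$ intertwining the two circle actions, transport continuous representatives by $(\phi\times\phi\times\phi)^*$, and check compatibility with the identifications~\eqref{eq: bc is measurable functions} --- but there is a real gap in your second step. You claim that $(\phi\times\phi\times\phi)^*$ yields a well-defined chain map between the two $L^\infty$ resolutions ``because $\phi$ intertwines $\rho$ and $\rho'$'', and then appeal to the general naturality of the boundary machinery. However, pullback by $\phi^{\times 3}$ is well-defined on $L^\infty$ \emph{classes} only if $\phi$ preserves the Lebesgue measure class, and equivariance alone does not guarantee this. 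In fact it typically fails: the circle homeomorphism conjugating two non-isometric cocompact Fuchsian representations is quasi-symmetric but singular with respect to Lebesgue measure. The paper flags exactly this obstruction (with a footnote) in Section~\ref{Sec:mcgonH2b}, where the same issue arises for the $\mcg(\Sigma)$-action; your ``$\Gamma$-equivariant measurable isomorphism between two such boundaries'' is not a measure-class isomorphism, so the abstract naturality you invoke does not apply as stated.

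The paper's remedy is to bypass the $L^\infty$ level at the comparison step. Starting from an honest continuous cocycle $f_{m_0}$ on $(S^1)^3\setminus\Delta$, it defines the pushed-forward cocycle $f_{\overline m_0}$ as a continuous \emph{function} (not a class), sends both to $C^2_b(\Gamma)^\Gamma$ via the integration maps $\psi^2_{m_0}$ and $\psi^2_{\overline m_0}$ of~\eqref{eq: chain map psi}, and then writes down an explicit primitive $\beta\in C^1_b(\Gamma)^\Gamma$ with $\psi_{m_0}^2(F_{m_0}) - \psi_{\overline m_0}^2(F_{\overline m_0}) = \delta\beta$, using only the pointwise cocycle identity for $f_{m_0}$. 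As the remark following Lemma~\ref{lem: formula for action of mapping class group} notes, one can alternatively run your naturality argument inside the Burger--Iozzi complex of genuine Borel functions rather than $L^\infty$ classes; that is the correct abstract framework for the route you sketch.
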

\begin{proof}
	Consider two hyperbolic metrics $m_0$ and $\overline{m}_0$ on $\Sigma$, and two isometries from the universal cover of $\Sigma$ (endowed with the lifts of $m_0$ and of $\overline{m}_0$) to $\Hp$. By the fundamental property of universal coverings, we have a $\Gamma$-equivariant isometry $h\colon \Hp \to \Hp$, where the actions of $\Gamma$ on the domain and the codomain depend on the two identifications corresponding to $m_0$ and $\overline{m}_0$.
	We denote by $\partial h\colon S^1\rightarrow S^1$ the $\Gamma$-equivariant homeomorphism induced by $h$ on the boundary at infinity.
	We have two resolutions
	\[ 0 \to \R \to L^\infty_{\rm alt}\left(S^1\right)\to L^\infty_{\rm alt}\left(S^1\times S^1\right) \to L^\infty_{\rm alt}\left(S^1\times S^1 \times S^1\right) \to \dots \]
	of $\R$ by normed $\Gamma$-modules --- what changes between the two is the action of $\Gamma$.
	However, they are both strong resolutions by relatively injective $\Gamma$-modules; therefore, any $\Gamma$-equivariant chain map from them to the standard resolution $C_b^\bullet(\Gamma)$, extending the identity on $\R$, induces a canonical isomorphism in cohomology, independent of the chain map \cite[Corollary 4.15]{frigerio2017bounded}.
	For example, we consider the chain maps $\psi_{m_0}^\bullet\colon L^\infty_{\rm alt}\left((S^1)^{\bullet+1}\right) \rightarrow C^\bullet_b(\Gamma)$ and $\psi_{\overline{m}_0}^\bullet\colon L^\infty_{\rm alt}\left((S^1)^{\bullet+1}\right) \rightarrow C^\bullet_b(\Gamma)$ defined by the following formula:
	\begin{equation}
		\label{eq: chain map psi}
		\psi^n(f)(\gamma_0,\dots,\gamma_n)= \int_{(S^1)^{n+1}}f(\gamma_0\cdot x_0,\dots, \gamma_n\cdot x_n) \ d\mu(x_0,\dots,x_n) ,
	\end{equation}
	where $\mu$ denotes the Lebesgue probability measure on $(S^1)^{n+1}$. Again, what changes between them is the action of $\Gamma$ on $S^1$. Since they are $\Gamma$-equivariant chain maps, they both induce the isomorphism in (\ref{eq: bc is measurable functions}).
	Let $F_{m_0} \in ZL^\infty_{\rm alt}\left((S^1)^3\right)^\Gamma$ be a $\Gamma$-invariant cocycle admitting a continuous representative $f_{m_0}\colon (S^1)^3\setminus \Delta \rightarrow \R$.
	We want to show that $\psi_{m_0}^2(F_{m_0})$ is cohomologous in $C^\bullet_b(\Gamma)^\Gamma$ to some cocycle of the form $\psi_{\overline{m}_0}^2(F_{\overline{m}_0})$, where $F_{\overline{m}_0}$ is a $\Gamma$-invariant cocycle admitting a continuous representative $f_{\overline{m}_0} \colon (S^1)^3\setminus \Delta\rightarrow \R$.
	To this end, we can simply define $f_{\overline{m}_0}$ as the push-forward of $f_{m_0}$ via $\partial h$:
	\[
	f_{\overline{m}_0}(\xi_0,\xi_1,\xi_2)=f_{m_0}(\partial h^{-1}(\xi_0), \partial h^{-1}(\xi_1), \partial h^{-1}(\xi_2)).
	\]
	Since $\partial h^{-1}$ is continuous and $\Gamma$-equivariant, it follows that $f_{\overline{m}_0}$ is $\Gamma$-invariant and continuous on $(S^1)^3\setminus\Delta$. 
	Moreover, as $\delta f_{m_0}\equiv 0$ (since $f_{m_0}$ is a continuous cocycle) and the map $(\xi_0,\xi_1,\xi_2)\mapsto (\partial h^{-1}(\xi_0), \partial h^{-1}(\xi_1), \partial h^{-1}(\xi_2))$ preserves the multidiagonal $\Delta$, it follows that $\delta f_{\overline{m }_0}\equiv 0$. 
	It remains to show that $\psi_{m_0}^2(F_{m_0})$ and $\psi_{\overline{m}_0}^2(F_{\overline{m}_0})$ are cohomologous in $C^\bullet_b(\Gamma)^\Gamma$.
	We have that
	\begin{align*}
		& (\psi_{m_0}^2(F_{m_0}) - \psi_{\overline{m}_0}^2(F_{\overline{m}_0}))(\gamma_0,\gamma_1,\gamma_2)\\
		& = \int_{(S^1)^{3}} f_{m_0}(\gamma_0 x_0,\dots, \gamma_2 x_2) \ d\mu
		-\int_{(S^1)^{3}} f_{\overline{m}_0}(\gamma_0 x_0,\dots, \gamma_2 x_2) \ d\mu \\
		& = \int_{(S^1)^{3}} \Big(f_{m_0}(\gamma_0 x_0,\dots, \gamma_2 x_2) - f_{m_0}\big(\partial h^{-1}(\gamma_0 x_0),\dots, \partial h^{-1}(\gamma_2 x_2)\big)\Big) \ d \mu  . 
	\end{align*}
	Therefore, if we set 
	\begin{align*}
	\beta(\gamma_0,\gamma_1)=
	\int_{(S^1)^{3}} \Big(&f_{m_0}\big(\gamma_0x_0, \gamma_1x_1, \partial h^{-1} (\gamma_1x_1)\big) \\
	&- f_{m_0}\big(\gamma_0x_0, \partial h^{-1} (\gamma_0 x_0), \partial h^{-1} (\gamma_1x_1)\big)\Big) \ d\mu ,
\end{align*}
it follows from the fact that $f_{m_0}$ is a cocycle that $\beta \in C^1_b(\Gamma)$ is a $\Gamma$-equivariant bounded cochain such that $\psi_{m_0}^2(F_{m_0}) - \psi_{\overline{m}_0}^2(F_{\overline{m}_0}) = \delta \beta$.
\end{proof}

\begin{lemma}
	\label{lem: deRham qm are continuous}
	For every hyperbolic metric $m$ on the surface $\Sigma$, the subspace $\Psi_m(\Omega^2(\Sigma)) \subseteq H^2_b(\Sigma) \cong H^2_b(\Gamma)$ is contained in $\calC$.
\end{lemma}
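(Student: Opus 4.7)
The plan is to represent each class $\Psi_m(\omega) \in H^2_b(\Sigma) \cong H^2_b(\Gamma)$, for $\omega \in \Omega^2(\Sigma)$, by the explicit continuous cocycle
\[
	f_\omega(\xi_0,\xi_1,\xi_2) = \int_{\str(\xi_0,\xi_1,\xi_2)} \tilde{\omega}
\]
obtained by integrating the pull-back of $\omega$ over the oriented ideal geodesic triangle with vertices $\xi_0, \xi_1, \xi_2 \in S^1 = \bi \Hp$, following the orientation convention of equation (\ref{eq:cociclo al bordo}). By Lemma \ref{lem: indipendence of calC from metric}, we may assume that the auxiliary metric $m_0$ appearing in the identification (\ref{eq: bc is measurable functions}) coincides with $m$, so that $S^1$ carries the $\Gamma$-action underlying both the Barge--Ghys construction and the definition of $\calC$.

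The first step is to check that $f_\omega$ defines an element of $\calC$. The alternating property and the $\Gamma$-invariance are immediate from the definition, and boundedness follows from the fact that ideal triangles in $\Hp$ have area at most $\pi$. Continuity on $(S^1)^3 \setminus \Delta$ is a routine dominated-convergence argument, using that a non-degenerate ideal triangle varies continuously with its vertices on compact subsets of $\Hp$. For the cocycle identity, I would bootstrap from Lemma \ref{lem: barge and ghys}: fixing any $\xi \in S^1$, the fact that $c_{\omega,\xi} \in C^2_b(\Gamma)^\Gamma$ is a cocycle unravels to $\delta f_\omega \equiv 0$ on $(\Gamma \cdot \xi)^4 \setminus \Delta$; since the orbit $\Gamma \cdot \xi$ is dense in $S^1$, continuity upgrades this to $\delta f_\omega \equiv 0$ on the entire non-degenerate locus.

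It then remains to identify $[f_\omega] \in \calC$ with the image in $H^2_b(\Gamma)$ of $\Psi_m(\omega)$. Under the isomorphism (\ref{eq: bc is measurable functions}), realized by the chain map $\psi_m$ of (\ref{eq: chain map psi}), the class $[f_\omega]$ is represented in $C^2_b(\Gamma)^\Gamma$ by $\psi_m^2(f_\omega)$; by Lemma \ref{lem: barge and ghys}, $\Psi_m(\omega)$ is represented by $c_{\omega,\xi}(\gamma_0,\gamma_1,\gamma_2) = f_\omega(\gamma_0\xi,\gamma_1\xi,\gamma_2\xi)$. The main obstacle is proving that these two cocycles are cohomologous in $C^\bullet_b(\Gamma)^\Gamma$: both arise from $f_\omega$ through augmentation-preserving $\Gamma$-equivariant chain maps (integration against Lebesgue measure, and pointwise evaluation at $\xi$) defined on the subcomplex of continuous-off-$\Delta$ cochains, into the standard resolution. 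The cleanest way I see to conclude is to write $\psi_m^2(f_\omega) - c_{\omega,\xi} = \delta \beta$ explicitly, with $\beta \in C^1_b(\Gamma)^\Gamma$ given by a mixed integration--evaluation formula, in the spirit of the $\beta$ constructed in the proof of Lemma \ref{lem: indipendence of calC from metric}; the cocycle identity for $f_\omega$ together with Fubini is what makes the computation go through, yielding $\Psi_m(\omega) = [f_\omega] \in \calC$.
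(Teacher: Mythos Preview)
Your proposal is correct and follows essentially the same route as the paper: reduce to $m=m_0$ via Lemma \ref{lem: indipendence of calC from metric}, define $f_\omega$ by integration over ideal triangles, check $f_\omega\in\calC$, and then identify $\psi_m^2(f_\omega)$ with $c_{\omega,\xi}$ (and hence with $\theta(c_\omega)$ by Lemma \ref{lem: barge and ghys}). The only cosmetic differences are that the paper asserts the cocycle identity for $f_\omega$ directly rather than via your density-of-orbit argument, and for the cohomology identification it cites the prism construction of \cite[Lemma 3.9]{bargeghys} instead of building an explicit $\beta$ as you propose---but the prism construction \emph{is} an explicit $\beta$, so these amount to the same thing.
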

\begin{proof}
	By Lemma \ref{lem: indipendence of calC from metric}, we can assume that the metric $m$ used to inject $\Omega^2(\Sigma)$ into $H_b^2(\Gamma)$ coincides with the one that gives the inclusion of $\calC$ into $H_b^2(\Gamma)$.
	We know that the isomorphism in (\ref{eq: bc is measurable functions}) is induced by any $\Gamma$-equivariant chain map between the resolutions $C^\bullet_b(\Gamma)$ and $L^\infty_{\rm alt}\left((S^1)^\bullet\right)$ \cite[Corollary 4.15]{frigerio2017bounded}, in particular it is induced by the chain map $\psi\colon L^\infty_{\rm alt}\left((S^1)^{\bullet+1}\right) \rightarrow C^\bullet_b(\Gamma)$ defined in (\ref{eq: chain map psi}).
	For every $\omega \in \Omega^2(\Sigma)$ we define $f_{\omega} \in ZL^\infty_{\rm alt}(S^1\times S^1 \times S^1)^\Gamma$ as
	\[
		f_\omega(\xi_0,\xi_1,\xi_2)=\int_{\str(\xi_0,\xi_1,\xi_2)}\tilde{\omega} .
	\]
	Since $\tilde{\omega}\in \Omega^2(\Hp)$ is continuous and bounded, it is straightforward to check that the cocycle $f_\omega$ is continuous on $(S^1)^3 \setminus \Delta$, that is, $f_\omega \in \calC$.

	The cocycle $c_{\omega,\xi}$, defined in (\ref{eq:cociclo al bordo}), is cohomologous to $\psi(f_\omega)$: this can be seen by using the same prism construction as in \cite[proof of Lemma 3.9]{bargeghys}.
	Thus, putting together these facts with Lemma \ref{lem: barge and ghys}, one gets that $\psi(f_\omega) \in \calC$ is cohomologous to the de Rham class $\theta(c_\omega)$.
\end{proof}

\section{Action of the mapping class group on the second bounded cohomology}\label{Sec:mcgonH2b}
The (positive) mapping class group $\mcg(\Sigma)$ of $\Sigma$ is the group of orientation\nobreakdash-\hspace{0pt}preserving self-diffeomorphisms of $\Sigma$ considered up to isotopy.
Since homotopic continuous functions induce equal maps in bounded cohomology, we have a natural action of $\mcg(\Sigma)$ on $H_b^2(\Sigma)$.

\begin{rmk}
	\label{rmk: invariance de Rham classes}
	The linear subspace of $H^2_b(\Sigma)$ generated by de Rham classes is invariant under the action of the mapping class group. 
	Indeed, consider $\varphi \in \mcg(\Sigma)$ and let $h$ denote a self-diffeomorphism of $\Sigma$ that represents $\varphi$.
	Recall that $h$ acts on $\Omega^2(\Sigma)$ and on $\mathrm{Hyp}(\Sigma)$ by pull-backs, where $\mathrm{Hyp}(\Sigma)$ denotes the space of hyperbolic metrics on $\Sigma$. 
	Now, if we have a de Rham class $\Psi_m(\omega) \in H^2_b(\Sigma)$, where $m\in\mathrm{Hyp}(\Sigma)$ and $\omega \in \Omega^2(\Sigma)$, it's straightforward to verify that
	\[
	\varphi \cdot \Psi_m(\omega) = \Psi_{h\cdot m}(h^{-1}\cdot \omega) .
	\]
	Therefore $\varphi \cdot \Psi_m(\omega)$ is still a de Rham class. 
	Notice that some degree of flexibility  with the metrics is required in order to prove our claim (see Question \ref{que: dependence on metric} and Question \ref{que: invarinace on metric}).
\end{rmk}

Recall from Subsections \ref{sub:derham clas} and \ref{sec: bc as meas fun} that we have isomorphisms $H_b^2(\Sigma) \cong H_b^2(\Gamma) \cong ZL^\infty_{\rm alt} ((S^1)^3)^\Gamma$, so $\mcg(\Sigma)$ acts on all these vector spaces.
\begin{rmk}
	As already recalled in the introduction, the Dehn-Nielsen-Baer theorem gives a canonical isomorphism between $\mcg(\Sigma)$ and a subgroup of index 2 of $\out(\Gamma)=\aut(\Gamma)/\inn(\Gamma)$ \cite{farb2011primer}.
	The natural action $\out(\Gamma)\acts H^2_b(\Gamma)$ restricts to $\mcg(\Sigma)<\out(\Gamma)$;
	an element of $\mcg(\Sigma)$ represented by a diffeomorphism $h$ (fixing the basepoint of $\Sigma$) acts on $H^2_b(\Gamma)$ as the induced automomorphism $h_*\colon \Gamma\to\Gamma$.
\end{rmk}

We now describe more directly the action on $ZL^\infty_{\rm alt} ((S^1)^3)^\Gamma$.
We fix, once and for all, a hyperbolic metric (denoted by $m_0$ in Subsection \ref{sec: bc as meas fun}) on $\Sigma$ and an isometry between its universal cover and $\Hp$.
Any diffeomorphism $h\colon\Sigma\to\Sigma$ representing an element $\varphi=[h] \in \mcg(\Sigma)$ can be lifted to a map $\tilde h \colon \Hp\rightarrow \Hp$.
Notice that the lift of the representative $h$ is not canonical, but it is uniquely determined \emph{up to post-composition with elements of $\Gamma$}.
Since $\tilde h$ is a quasi-isometry \cite[Section 2]{minski2013brief}, it extends to the boundary at infinity, and we call $\partial \tilde h$ the induced homeomorphism on $S^1 = \bi\Hp$.
Up to composing with an isometry in $\Gamma$, $\partial \tilde h$ does not depend on the chosen representative $h$ of the mapping class \cite{minski2013brief}.

\begin{rmk}
The space $S^1 \times S^1 \times S^1 \setminus \Delta$ can be identified with the disjoint union of two copies of the unit tangent bundle $T^1 \Hp$, and its quotient by $\Gamma$ with the disjoint union of two copies of $T^1 \Sigma$. The corresponding action of $\mcg(\Sigma)$ on this space has also been studied in \cite{Souto10}.
\end{rmk}

We want to give a convenient description of the action of $\mcg(\Sigma)$ on $ZL^\infty_{\rm alt} ((S^1)^3)^\Gamma$.
The candidate formula is the following: given $\phi\in \mcg(\Sigma)$, $f \in L^\infty_{\rm alt} ((S^1)^3)^\Gamma$ and $(\xi_0,\xi_1,\xi_2)\in (S^1)^3$, we set
\[
	(\phi \cdot f) (\xi_0,\xi_1,\xi_2) = f \left(\partial\tilde h^{-1}( \xi_0),\partial\tilde h^{-1}(\xi_1),\partial\tilde h^{-1}(\xi_2)\right),
\]
where $h$ is any diffeomorphism representing $\phi$ and $\tilde h$ is any of its lifts on $\Hp$.
However, \emph{this formula is not well defined}, since $\partial \tilde h$, albeit continuous, might not preserve the Lebesgue measure-class, i.e., images of subsets of zero Lebesgue measure might have positive Lebesgue measure.\footnote{We thank the referee for having pointed out this issue, which we had overlooked in the first version of the paper.}
With the following lemma we show however that the formula in fact holds for cocycles in $CH_b^2(\Gamma)$.
Recall that there is a natural $\Gamma$-equivariant chain map
\[
\psi\colon L^\infty_{\rm alt}((S^1)^3)\rightarrow C^2_b(\Gamma) ,
\]
defined in (\ref{eq: chain map psi}), which induces the isomorphism in (\ref{eq: bc is measurable functions}).
\begin{lemma}
	\label{lem: formula for action of mapping class group}
	Let $f\colon (S^1)^3\setminus\Delta\rightarrow \R$ be a map in $CH^2_b(\Gamma)$ and consider $\phi\in \mcg(\Sigma)$, $h$ any diffeomorphism representing $\phi$ and $\tilde h$ any of its lifts on $\Hp$.
	Then the function $g\colon (S^1)^3\setminus \Delta\rightarrow \R$ defined by the formula
	\[
	g(\xi_0,\xi_1,\xi_2)=f(\partial \tilde h^{-1}(\xi_0), \partial \tilde h^{-1}(\xi_1), \partial \tilde h^{-1}(\xi_2)) \,
	\]
	is an element in $CH_b^2(\Gamma)$ representing the class of $\varphi \cdot \psi(f)$ in $H^2_b(\Gamma)$.
	In particular, the subspace $\calC \subseteq H^2_b(\Gamma)$ is preserved by the action $\mcg(\Sigma)\acts H^2_b(\Gamma)$.
\end{lemma}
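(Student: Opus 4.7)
My plan is to mimic the structure and tools of the proof of Lemma \ref{lem: indipendence of calC from metric}, replacing the $\Gamma$-equivariant isometry coming from a change of metric by the $\Gamma$-\emph{equivariant-up-to-twist} homeomorphism $\partial\tilde h$ induced by a lift of a representative of $\varphi$. The essential point is that, although $\partial \tilde h$ may not preserve the Lebesgue measure-class on $S^1$, it \emph{does} preserve both the continuity and the multidiagonal structure, and this is exactly the regularity we want to exploit in order to pass from the ill-defined formula to a well-defined cohomological identity.

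First I would verify that $g$ is a legitimate element of $\calC$. Continuity of $g$ on $(S^1)^3\setminus\Delta$ follows because $\partial\tilde h^{-1}$ is a homeomorphism of $S^1$ that sends $\Delta$ to $\Delta$; boundedness and the alternating property are inherited from $f$. For $\Gamma$-invariance, I would use the basic relation $\tilde h\circ\gamma = h_*(\gamma)\circ\tilde h$ on $\Hp$, valid for every $\gamma\in\Gamma$, which passes to the boundary as $\partial\tilde h^{-1}(\gamma\cdot\xi)=h_*^{-1}(\gamma)\cdot\partial\tilde h^{-1}(\xi)$. Combined with $\Gamma$-invariance of $f$, this yields $\Gamma$-invariance of $g$. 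Finally, $\delta g\equiv 0$ on $(S^1)^4$ minus the larger diagonal, because $\delta f\equiv 0$ there and the diagonal condition is preserved by $\partial\tilde h^{-1}$.

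Next I would show that $\psi(g)$ is cohomologous to $\varphi\cdot\psi(f)$ in $C^\bullet_b(\Gamma)^\Gamma$. Using the relation above to push each $\partial\tilde h^{-1}$ past the $\gamma_i$'s, one computes
\[
\psi(g)(\gamma_0,\gamma_1,\gamma_2)=\int_{(S^1)^3}f\bigl(h_*^{-1}(\gamma_0)\partial\tilde h^{-1}(x_0),h_*^{-1}(\gamma_1)\partial\tilde h^{-1}(x_1),h_*^{-1}(\gamma_2)\partial\tilde h^{-1}(x_2)\bigr)\,d\mu,
\]
while
\[
(\varphi\cdot\psi(f))(\gamma_0,\gamma_1,\gamma_2)=\int_{(S^1)^3}f\bigl(h_*^{-1}(\gamma_0)x_0,h_*^{-1}(\gamma_1)x_1,h_*^{-1}(\gamma_2)x_2\bigr)\,d\mu.
\]
The difference can be turned into a coboundary by the very same prism construction used in Lemma \ref{lem: indipendence of calC from metric}: one defines a bounded cochain $\beta\in C^1_b(\Gamma)$ by integrating, over $x_0,x_1$, the two-term combination of values of $f$ that interpolates between $(h_*^{-1}(\gamma_i)x_i)_i$ and $(h_*^{-1}(\gamma_i)\partial\tilde h^{-1}(x_i))_i$ through the mixed triples. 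The cocycle identity for $f$ gives $\delta\beta=\psi(g)-\varphi\cdot\psi(f)$, and a direct check shows that $\beta$ is $\Gamma$-invariant, since the common factor $h_*^{-1}(\gamma)$ can be pulled out of every argument and absorbed using $\Gamma$-invariance of $f$.

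The main conceptual subtlety — and the reason the ``naive'' formula had to be reinterpreted — is precisely the one flagged in the statement: $\partial\tilde h$ need not preserve Lebesgue null sets, so composing with it has no meaning on arbitrary $L^\infty$-classes. By restricting to a class admitting a continuous representative $f$ on $(S^1)^3\setminus\Delta$, the composition becomes unambiguous and continuous; the cohomological identity with $\varphi\cdot\psi(f)$ then follows from the prism argument. Once this is established, the $\mcg(\Sigma)$-invariance of $\calC$ is immediate: for every $\varphi\in\mcg(\Sigma)$ and every $[\psi(f)]\in\calC$, the class $\varphi\cdot[\psi(f)]$ is represented by $\psi(g)$ with $g$ continuous on $(S^1)^3\setminus\Delta$, hence it belongs to $\calC$.
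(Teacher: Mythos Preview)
Your proposal is correct and follows essentially the same route as the paper's proof: first verify $g\in\calC$ via the twisted equivariance relation $\partial\tilde h^{-1}(\gamma\cdot\xi)=h_*^{-1}(\gamma)\cdot\partial\tilde h^{-1}(\xi)$, then show $\psi(g)$ and $\varphi\cdot\psi(f)$ differ by the coboundary of an explicit $\beta\in C^1_b(\Gamma)^\Gamma$ built by a prism interpolation, exactly as in Lemma~\ref{lem: indipendence of calC from metric}. The only cosmetic difference is that the paper writes the prism cochain in terms of $g$ and $\partial\tilde h$ rather than $f$ and $\partial\tilde h^{-1}$; the two formulations are equivalent through $g=f\circ(\partial\tilde h^{-1})^{\times 3}$.
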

\begin{proof}
	Because of continuity, we have that $f$ is $\Gamma$-invariant and satisfies the cocycle condition $\delta f \equiv 0$ (everywhere, not just almost everywhere).
	We denote by $h_*$ the automorphism of $\Gamma$ induced by $h$.
	The following formula can be deduced from standard covering theory: for every $\gamma \in \Gamma$ and every $\xi \in S^1$
	\[
	h_*(\gamma)\cdot \partial \tilde h(\xi)=\partial \tilde h(\gamma \cdot \xi) .
	\]
	Thus, the map $g\colon (S^1)^3\setminus \Delta \rightarrow \R$ is a bounded  $\Gamma$-invariant continuous map representing an element in $L^{\infty}_{\rm alt} ((S^1)^3)^\Gamma$. Moreover, as the map sending $(\xi_0, \xi_1,\xi_2)$ to $(\partial \tilde h^{-1}(\xi_0), \partial \tilde h^{-1}(\xi_1), \partial \tilde h^{-1}(\xi_2))$ preserves $\Delta$, we have that $\delta g\equiv 0 $. In particular, $g$ represents an element in $CH^2_b(\Gamma)$.   

	It remains to show that $\phi \cdot \psi(f)$ and $\psi(g)$ are cohomologous in $C^\bullet_b(\Gamma)^\Gamma$. 
	We have that
	\begin{align*}
		& (\varphi\cdot \psi(f) - \psi(g))(\gamma_0,\gamma_1
		,\gamma_2)\\
		& = \psi(f)((h^{-1})_*(\gamma_0), (h^{-1})_*(\gamma_1), (h^{-1})_*(\gamma_2))- \psi(g)(\gamma_0,\gamma_1, \gamma_2)\\
		& = \int_{(S^1)^{3}} \Big(f \big((h^{-1})_*(\gamma_0), \dots \big) - g(\gamma_0\cdot x_0,\dots)\Big) \ d\mu \\
		& =  \int_{(S^1)^{3}}\Big( f \big(\partial \tilde h^{-1}(\gamma_0\cdot \partial \tilde h (x_0)), \dots \big) - g(\gamma_0\cdot x_0,\dots)\Big) \ d\mu .
	\end{align*}
	If we set 
	\begin{align*}
	\beta(\gamma_0,\gamma_1) =\int_{(S^1)^{3}} \Big(&g \big(\gamma_0\cdot \partial \widetilde h (x_0), \gamma_1\cdot \partial \widetilde h (x_1), \gamma_1\cdot x_1 \big) \\&- g \big(\gamma_0\cdot \partial \widetilde h (x_0), \gamma_0\cdot x_0, \gamma_1\cdot x_1 \big)\Big) \ d\mu  ,
\end{align*}
	then it follows from $\delta g = 0$ that $\beta \in C^1_b(\Gamma)$ is a $\Gamma$-equivariant bounded cochain such that $\varphi\cdot \psi(f) - \psi(g) = \delta \beta$.
\end{proof}
\begin{rmk}
	The previous lemma can also be derived as a consequence of \cite[Proposition 1, Corollary 2]{burger2002boundary},  where a chain complex of Borel regular functions on the boundary (not function classes but actual functions) is considered.
	In the same way, we could have streamlined part of the proof of Lemma \ref{lem: indipendence of calC from metric}.
	We thank the referee for this observation.
\end{rmk}
Since we are considering orientation-preserving maps, it clearly follows from Lemma \ref{lem: formula for action of mapping class group} that the Euler class is a fixed point of the action $\mcg(\Sigma) \acts \calC$.
The main result of this work states that the multiples of the Euler class are, in fact, the only fixed classes of this action; even more, the subspace generated by the Euler class is the only finite-dimensional nontrivial $\mcg(\Sigma)$-invariant subspace.
\begin{varthm}{thm: invariant continuous subspacesintro}
	\label{thm: invariant continuous subspaces}
	The subspace  generated by the Euler class is the only finite-dimensional nontrivial subspace preserved by the action $\mcg(\Sigma) \acts \calC$.
\end{varthm}
We devote the next sections to the proof of this result.

\subsection{Action of a Dehn twist}
Let $\sigma$ be a simple closed geodesic in $\Sigma$. With a sligth abuse of notation (but for a better readibility), we denote by $\tau_{\sigma}\in \mcg(\Sigma)$ both a Dehn twist in $\Sigma$ along the geodesic $\sigma$ and the corresponding mapping class in $\mcg(\Sigma)$ (specifying what we mean when this notation might be ambigous).

To understand the behaviour at infinity of a lift of a $\tau_{\sigma}$ (as a homeomorphism of $S^1 = \bi\Hp$), it is enough to understand its \emph{coarse} action on  $\Hp$.
In order to do that, we describe some (discontinuous) maps from the hyperbolic plane to itself, which are called \emph{earthquakes} along $\sigma$ (see, e.g., \cite[Section 2]{minski2013brief}, where some wonderful pictures are displayed).
Every earthquake along $\sigma$ will be close to a lift of $\tau_ \sigma$; hence, it will induce on $\bi\Hp$ the same homeomorphism induced by that lift.

Consider the preimage in $\Hp$ of the geodesic $\sigma \subset \Sigma$.
It consists of countably many disjoint lines, dividing $\Hp$ into many pieces that are called \emph{plates}.
Choose one of these plates, and call it the \emph{central plate}; we denote it by $\mathcal{P}$.
The boundary of $\mathcal{P}$ consists of infinitely many geodesic lines, corresponding to the components of the complement of $\mathcal{P}$.
We call these components \emph{affected regions}.
Every affected region is a halfplane containing infinitely many plates (see Figure \ref{fig: lifts of simple closed geod}).
\begin{figure}[ht]
	\includegraphics{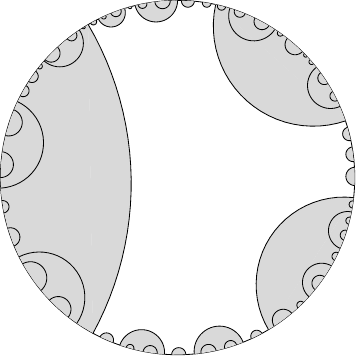}
	\caption{The white region represents the central plate; the grey regions are the affected regions.}
	\label{fig: lifts of simple closed geod}
\end{figure}

An earthquake $\eqs$ along $\sigma$ can be described as follows:
\begin{itemize}
	\item It is the identity on the central plate $\mathcal{P}$;
	\item If $\mathcal{Q}$ is a plate adjacent to $\mathcal{P}$ along a geodesic $\tilde\sigma$ (a lift of $\sigma$), then the earthquake acts on $\mathcal{Q}$ as the orientation-preserving hyperbolic isometry with axis $\tilde\sigma$ and translation length equal to the length of $\sigma$, in the direction that moves $\tilde\sigma$ counter-clockwise around $\mathcal{P}$;
	\item It is extended $\Gamma$-equivariantly on the other plates.
\end{itemize}
We remark once again that $\eqs$ stays at bounded distance from a lift of the map $\tau_\sigma$. However, it cannot be a lift of $\tau_\sigma$ itself, as it is discontinuous.

The boundaries at infinity of $\mathcal P$ and of an affected region $\mathcal R$ are denoted by $\bi \mathcal P$ and $\bi \mathcal R$ respectively.
Both these sets are closed subsets of $S^1$.
In particular, $\bi \mathcal R$ is a closed interval, while $\bi \mathcal P$ is a Cantor set.
Note that the union of the boundaries at infinity of the affected regions is dense in $S^1$.
Moreover, whenever $\mathcal R_1$ and $\mathcal R_2$ are two distinct affected regions, then $\bi \mathcal R_1 \cap \bi\mathcal R_2$ is empty.

The dynamics of $\partial\eqs:S^1\to S^1$ can be described in the following way.
The set of fixed points of $\partial\eqs$ is given by $\bi \mathcal{P}$.
For every affected region $\mathcal R$, the closed interval $\bi \mathcal R$ is preserved by $\partial\eqs$, but not pointwise.
The endpoints of $\bi \mathcal R$, that are also contained in $\bi \mathcal{P}$, are the only fixed points in the interval: one of them is attractive, while the other is repulsive.
In particular, if $\xi$ is any point contained in the interior of $\bi\mathcal R$, then $(\partial\eqs)^k(\xi)$ converges to the attractive fixed point as $k$ goes to $+\infty$, while it converges to the repulsive fixed point as $k$ goes to $-\infty$.

\begin{rmk}
	\label{rmk: moving central plates by conjugation}
	Different choices of $\mathcal{P}$ yield different earthquakes along $\sigma$, and any two of these earthquakes differ by the composition with an isometry in $\Gamma$.
	We point out (even though this does not play any role in our arguments) that not every composition of an earthquake with an isometry in $\Gamma$ is an earthquake along $\sigma$, because for it to be an earthquake it must fix pointwise one of the plates, which is not guaranteed in general.
	Accordingly, not every lift of a Dehn twist is close to an earthquake.
\end{rmk}

\begin{defn}\label{3regprop}
	We say that $f \in \calC$ has the \emph{3-region property} if, for every simple closed geodesic $\sigma$ of $\Sigma$, for every earthquake $\eqs$ along $\sigma$, and for every triple $\mathcal R_1,\, \mathcal R_2,\,\mathcal R_3$ of distinct affected regions (with respect to the central plate of $\eqs$), we have that $f$ is constant on $\bi\mathcal R_1 \times \bi\mathcal R_2 \times \bi\mathcal R_3 \subseteq (S^1)^3$.
\end{defn}

\begin{lemma}\label{lemma: constant on affected region}
	If $f\in \calC$ belongs to a finite-dimensional subspace $V$ of $\calC$ that is invariant under the action of $\mcg(\Sigma)$, then $f$ has the 3-region property.
\end{lemma}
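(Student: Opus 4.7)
The plan is to exploit the North--South dynamics of a Dehn twist on $S^1 = \bi \Hp$ together with the finite-dimensionality of $V$. I would fix a simple closed geodesic $\sigma$, an earthquake $\eqs$ along $\sigma$ with central plate $\mathcal P$, and three distinct affected regions $\mathcal R_1, \mathcal R_2, \mathcal R_3$, with attractive/repulsive fixed points $a_i, r_i \in \bi \mathcal R_i$ of $\partial \eqs$. Both $a_i$ and $r_i$ lie in $\bi \mathcal P$, hence are fixed by $\partial\eqs$; since $\bi\mathcal R_1, \bi\mathcal R_2, \bi\mathcal R_3$ are pairwise disjoint, the points $(a_1,a_2,a_3)$ and $(r_1,r_2,r_3)$ live in $(S^1)^3 \setminus \Delta$. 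Let $T \colon V \to V$ be the invertible linear operator induced by $\tau_\sigma$. Since the earthquake $\eqs$ stays at bounded distance from some lift of $\tau_\sigma$, the two maps induce the same boundary homeomorphism up to composition with an element of $\Gamma$ (harmless by $\Gamma$-invariance of our cocycles). Lemma \ref{lem: formula for action of mapping class group} then says that $T$ acts on continuous representatives by precomposition with $\Phi^{-1}$, where $\Phi\colon(S^1)^3 \to (S^1)^3$ denotes the diagonal action of $\partial\eqs$.

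Next I would pick a basis $g_1, \dots, g_n$ of $V$ and assemble the continuous vector-valued map $v(\vec\xi) = (g_1(\vec\xi), \dots, g_n(\vec\xi))$ on $(S^1)^3 \setminus \Delta$. Expressing $T$ as a matrix in this basis produces some $M \in \mathrm{GL}_n(\R)$ satisfying $v(\Phi(\vec\xi)) = M v(\vec\xi)$, so $v(\Phi^k(\vec\xi)) = M^k v(\vec\xi)$ for all $k \in \Z$ (noting that $\Phi$ preserves $(S^1)^3 \setminus \Delta$, because $\partial\eqs$ keeps each $\bi\mathcal R_i$ invariant). For $\vec\xi$ with $\xi_i \in \mathrm{int}(\bi \mathcal R_i)$, the dynamics of $\partial\eqs$ described in the excerpt give $\Phi^k(\vec\xi) \to (a_1,a_2,a_3)$ and $\Phi^{-k}(\vec\xi) \to (r_1,r_2,r_3)$ as $k\to\infty$, so by continuity of $v$
\[
M^k v(\vec\xi) \to v(a_1,a_2,a_3), \qquad M^{-k} v(\vec\xi) \to v(r_1,r_2,r_3),
\]
and both limit vectors are fixed by $M$ (since $\Phi$ fixes the corresponding points).

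The crux is then the elementary linear algebra fact that if $M \in \mathrm{GL}_n(\R)$ and both sequences $M^k v$ and $M^{-k} v$ converge, then $v$ itself is fixed by $M$ and equals both limits. This follows at once from the splitting $\R^n = E_{<} \oplus E_{=} \oplus E_{>}$ into (real) generalized eigenspaces for eigenvalues of absolute value strictly less than, equal to, and greater than $1$: forward convergence forces the $E_>$-component to vanish and the $E_=$-component to lie in $\ker(M - I)$, while backward convergence symmetrically kills the $E_<$-component. Applied to $v(\vec\xi)$, this gives $v(\vec\xi) = v(a_1,a_2,a_3)$ on the whole product $\mathrm{int}(\bi \mathcal R_1) \times \mathrm{int}(\bi \mathcal R_2) \times \mathrm{int}(\bi \mathcal R_3)$; hence every $g \in V$, and in particular $f$, is constant there, and by continuity on the full closure $\bi \mathcal R_1 \times \bi \mathcal R_2 \times \bi \mathcal R_3$. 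The hard part will be the linear algebra step --- recognizing that forward \emph{and} backward convergence together (the bi-infiniteness of $\langle\tau_\sigma\rangle$) is what rigidifies $v(\vec\xi)$ to be $M$-fixed --- plus the bookkeeping to match the boundary action of a lift of $\tau_\sigma$ with the dynamics of the earthquake $\partial\eqs$ on $S^1$.
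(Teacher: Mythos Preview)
Your argument is correct, but the route differs from the paper's in the key step. The paper observes that, since $\tau_\sigma$ acts on $\calC$ by precomposition with a homeomorphism, it preserves the sup-norm; equivalence of norms on the finite-dimensional $V$ then gives a uniform bound $|\alpha_i^{(n)}|\le M_f$ on the coefficients of $\tau_\sigma^{n}\cdot f$ in a fixed basis, for all $n\in\Z$. With this bound in hand, one-sided convergence to the attractive triple $(a_1^+,a_2^+,a_3^+)$ already suffices: the difference $f(\xi_1,\xi_2,\xi_3)-f(a_1^+,a_2^+,a_3^+)$ is estimated by $M_f$ times a sum of terms that go to zero by continuity of the basis functions. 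No spectral analysis of $M$ is needed, and the repulsive points never enter.

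Your approach trades the sup-norm observation for the two-sided dynamics of $\partial\eqs$: instead of bounding $M^k$, you use that both $M^k v(\vec\xi)$ and $M^{-k} v(\vec\xi)$ converge, and then invoke the generalized eigenspace decomposition $E_<\oplus E_=\oplus E_>$ to force $v(\vec\xi)\in\ker(M-I)$. This is a genuinely different mechanism, and it is sound (your claim about the $E_=$-component is correct: on $E_=$, convergence of $M^k w$ happens only for $w\in\ker(M-I)$). The paper's argument is more elementary---just norm equivalence and continuity---while yours would still go through in situations where no invariant norm is available but the bi-infinite orbit structure persists. Here both lead to the same conclusion with comparable effort; the ``hard part'' you anticipated in the linear algebra is in fact avoidable via the paper's isometry observation.
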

\begin{proof}
	We consider two norms on $V$:
	\begin{itemize}
		\item The sup-norm, denoted by $\lVert \cdot\rVert_\infty$;
		\item The $\ell^1$-norm with respect to a fixed basis $\mathcal{B}=\{f_1, \ldots, f_k\}$ of $V$, defined as
		      \[\lVert \beta_1 f_1 + \ldots + \beta_k f_k \rVert_{1, \mathcal{B}}=\sum_{i=1}^k |\beta_i|.\]
	\end{itemize}

	Fix a simple closed geodesic $\sigma$ of $\Sigma$, a Dehn twist $\tau_\sigma$ along $\sigma$, an earthquake $\eqs$ along $\sigma$, and a triple $\mathcal R_1,\, \mathcal R_2,\,\mathcal R_3$ of distinct affected regions.

	Let $f \in V$.
	Since $V$ is $\tau_\sigma$-invariant, for every $n \in \Z$ there are real numbers $\alpha^{(n)}_{1}, \ldots, \alpha_{k}^{(n)}$ such that
	\[ \tau_\sigma^{n}\cdot f= \alpha^{(n)}_{1} f_1 + \ldots + \alpha^{(n)}_{k} f_k.\]
	By the description of the action given in Lemma \ref{lem: formula for action of mapping class group}, we also have that $\lVert f \rVert_\infty=\lVert \tau^{n}_\sigma\cdot f \rVert_\infty$.
	Since all norms on a finite-dimensional vector space are equivalent, there exists $K>0$ such that $\lVert \tau^{n}_\sigma \cdot f \rVert_{1, \mathcal{B}} \leq K \cdot \lVert \tau^n_\sigma\cdot f \rVert_\infty = K \cdot  \lVert f \rVert_\infty$.
	Therefore, the constant $M_f = K \cdot \|f\|_\infty>0$ (depending only on $f$) satisfies
	$\left|\alpha_{i}^{(n)}\right| \leq M_f$,
	for every $i\in \{1, \ldots, k\}$ and $n \in \Z$.

	For $j\in \{1,2,3\}$, let $a_j^+ \in \bi \mathcal R_j$ be the attractive point of $\bi \mathcal{R}_j$ and let $\xi_j$ be any point in the interior of $\bi \mathcal R_j$.
	Since the affected regions $\mathcal R_j$ are distinct, then also the points $a^+_j$ are pairwise distinct.
	Therefore the triple $(a^+_1,a^+_2,a^+_3)\in (S^1)^3$ does not lie in the multidiagonal $\Delta$.
	By Lemma \ref{lem: formula for action of mapping class group}, for every $n \in \mathbb{Z}$ we have that
	\[f(\xi_1, \xi_2, \xi_3)=(\tau_\sigma^{-n}\cdot f)(\partial\eqs^n (\xi_1), \partial\eqs^n (\xi_2), \partial\eqs^n (\xi_3)).
	\]
	Since $f_i  \in \mathcal  B \subset \calC$ is continuous on $(S^1)^3\setminus \Delta$, for every $\varepsilon>0$ there exists $n_0\in \mathbb{Z}$ such that, for each $i\in \{1, \dots, k\}$ and for each $n\in \Z_{\geq n_0}$ we have that
	\[
		\left|f_i \left(\partial\eqs^n(\xi_1),\partial\eqs^n(\xi_2),\partial\eqs^n(\xi_3)\right)
		-f_i \left(a^+_1,a^+_2,a^+_3 \right)\right|
		\leq \frac{\varepsilon}{k \cdot M_f}.
	\]
	Using the fact that $a^+_1, a^+_2$ and $a^+_3$ are fixed by $\partial\eqs^n$, it follows that
	\begin{align*}
		     & \left|f(\xi_1, \xi_2, \xi_3)-f \left(a^+_1,a^+_2,a^+_3\right)\right|                                                                                                                      \\
		=    & \left|(\tau_{\sigma}^{-n}\cdot f)\left(\partial\eqs^n(\xi_1),\partial\eqs^n(\xi_2),\partial\eqs^n(\xi_3)\right)-(\tau_{\sigma}^{-n}\cdot f) \left(a^+_1,a^+_2,a^+_3\right)\right|         \\
		=    & \left\lvert \sum_{i=1}^k \alpha_i^{(-n)} \left( f_i \left(\partial\eqs^n(\xi_1),\partial\eqs^n(\xi_2),\partial\eqs^n(\xi_3)\right)- f_i\left(a^+_1,a^+_2,a^+_3\right)\right) \right\rvert \\
		\leq & M_f\cdot \sum_{i=1}^k \left|f_i\left(\partial\eqs^n(\xi_1),\partial\eqs^n(\xi_2),\partial\eqs^n(\xi_3)\right)-f_i(a^+_1,a^+_2,a^+_3)\right|                                               \\
		\leq & M_f\cdot \frac{\varepsilon}{M_f}=\varepsilon.
	\end{align*}
	Since $\varepsilon$ was arbitrary, we obtain that
	$f(\xi_1, \xi_2, \xi_3) =f\left(a^+_1,a^+_2,a^+_3\right)$,
	and therefore $f$ is constant on the interior of $\bi\mathcal R_1 \times \bi\mathcal R_2 \times \bi\mathcal R_3 \subseteq (S^1)^3$.
	By continuity, $f$ is constant on $\bi\mathcal R_1 \times \bi\mathcal R_2 \times \bi\mathcal R_3$.
\end{proof}

\section{Proof of Theorem \ref{thm: invariant continuous subspaces}}\label{Sec:proof}
In this section we prove that if $f \in \calC$ has the 3-region property, then it is locally constant (Lemma \ref{lem: continuous invariants are locally constant}), and then deduce Theorem \ref{thm: invariant continuous subspaces}.
The strategy is to ``separate'' triples of distinct points in $\bi \Hp$, trying to put them in distinct affected regions with respect to some earthquake, so that the 3-region property can be applied.
But we cannot accomplish this for every triple; once a triple of points is given, the best we can do in general is to find an earthquake so that we fall in one of the following three situations:
\begin{itemize}
	\item The three points belong to three distinct affected regions;
	\item Two of the points belong to two distinct affected regions, and the remaining one is in the boundary at infinity of the central plate;
	\item One of the points belongs to an affected region, while the other two are in the boundary at infinity of the central plate.
\end{itemize}
In the first case, the 3-region property readily implies that $f$ is constant in a neighbourhood of the triple.
In the other two cases, we have to resort to more involved arguments (Lemmas \ref{lemma: difficile} and \ref{lem: continuous invariants are locally constant}).

We start by proving the following general result, stating that for every finite subset $\mathcal G$ of closed geodesics in a compact Riemannian manifold $M$, every geodesic ray $\gamma \colon [0,\infty)\to M$ is either definitely close to an element of $\mathcal G$ or far apart from every element of $\mathcal G$ for arbitrarily large times $t \in[0,\infty)$.

\begin{lemma}\label{lemma:fellow_geodesic}
	Let $M$ be a closed connected Riemannian manifold, and let $\mathcal{G} = \{\sigma_1, \dots, \sigma_k\}$ be a finite set of closed geodesics in $M$.
	Then there is a constant $\varepsilon > 0$ (which can be taken as small as desired) such that, for every geodesic ray $\gamma:[0,+\infty)\to M$, one of the following holds:
	\begin{enumerate}
		\item There are arbitrarily large times $t \in [0,+\infty)$ such that the point $\gamma(t)$ has distance at least $\varepsilon$ to each of the geodesics in $\mathcal{G}$;
		\item There are a closed geodesic $\sigma_i \in \mathcal{G}$ and a time $t_0$ such that, for every $t \ge t_0$, the point $\gamma(t)$ has distance strictly smaller than $\varepsilon$ from $\sigma_i$.
	\end{enumerate}
\end{lemma}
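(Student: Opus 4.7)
My plan is to prove the contrapositive: assuming (1) fails with parameter $\varepsilon$, I deduce (2). The approach uses the $\omega$-limit set of the velocity curve in the unit tangent bundle $T^1M$ together with compactness and a transversality argument. Concretely, I lift $\gamma$ to the velocity curve $\gamma'\colon[0,\infty)\to T^1M$ and set $L := \bigcap_{T\geq 0}\overline{\gamma'([T,\infty))}$. Since $T^1M$ is compact, $L$ is non-empty, compact, invariant under the geodesic flow $\varphi$, and \emph{connected} as a nested intersection of continua in a compact metric space. The failure of (1) means $d(\gamma(t),\bigcup_i\sigma_i)<\varepsilon$ for every $t$ large, so $\pi(L)\subseteq \overline{N_\varepsilon(\bigcup_i\sigma_i)}$; by flow-invariance, for every $v\in L$ the whole biinfinite geodesic $s\mapsto \pi(\varphi_s v)$ is contained in $\overline{N_\varepsilon(\bigcup_i\sigma_i)}$.

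The heart of the argument is to show, for $\varepsilon$ small enough, that the connected set $\pi(L)$ lies in the tube $\overline{N_\varepsilon(\sigma_{i_0})}$ around a \emph{single} $\sigma_{i_0}\in\mathcal G$; this yields $d(\gamma(t),\sigma_{i_0})<\varepsilon$ for every large $t$, which is (2). Two geometric ingredients would enter. First, distinct closed geodesics $\sigma_i\neq\sigma_j$ can meet only transversally and at finitely many points, with intersection angles bounded below by some $\delta_0>0$, since two distinct geodesics sharing a tangent direction at a point would coincide by uniqueness of the geodesic flow; consequently, for $\varepsilon$ small, the tubes $N_\varepsilon(\sigma_i)$ and $N_\varepsilon(\sigma_j)$ overlap only in small neighborhoods of $\sigma_i\cap\sigma_j$. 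Second, a geodesic orbit trapped in $\overline{N_\varepsilon(\bigcup_i\sigma_i)}$ for all time cannot transit from the tube of $\sigma_i$ to that of $\sigma_j$: a transit point must lie near $\sigma_i\cap\sigma_j$, and an ODE-type estimate (a unit-speed geodesic whose tangent is $\delta$-away from $\pm\dot\sigma_i$ at a point $\varepsilon$-close to $\sigma_i$ exits $N_\varepsilon(\sigma_i)$ in time $O(\varepsilon/\sin\delta)$) forces the tangent at the transit to be simultaneously close to $\pm\dot\sigma_i$ and to $\pm\dot\sigma_j$, contradicting $\delta_0>0$.

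The main obstacle is making the ``no-transit'' step rigorous when $\gamma$ may graze tubes briefly between longer tracking phases. A cleaner alternative, which I expect to use, is a compactness-via-contradiction argument: suppose the lemma fails, producing $\varepsilon_n\downarrow 0$ and rays $\gamma_n$ violating both (1) and (2); using the quantitative tracking estimate above, the $\gamma_n$ must admit a tracking stretch of $\sigma_i$ of length going to infinity, followed (possibly after brief interludes) by a tracking stretch of some $\sigma_j$ with $j\ne i$ of length going to infinity; shifting time so the transit between them happens at $t=0$, after passing to a subsequence $\gamma_n(0)\to p\in\sigma_i\cap\sigma_j$ and $\gamma_n'(0)\to v\in T^1_p M$; the resulting limit geodesic $s\mapsto \exp_p(sv)$ then coincides with $\sigma_i$ on $(-\infty,0]$ and with $\sigma_j$ on $[0,+\infty)$, forcing $\dot\sigma_i(p)=\pm\dot\sigma_j(p)$ and contradicting the angle bound $\delta_0$. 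Finally, plugging the key step back into the $\omega$-limit argument, connectedness of $\pi(L)$ localizes it inside a single tube and yields (2).
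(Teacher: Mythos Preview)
Your compactness-via-contradiction plan and intended endgame (a limit geodesic forced to agree with two distinct $\sigma_i$) match the paper's strategy. The gap is the assertion that $\gamma_n$ admits tracking stretches of $\sigma_i$ \emph{and} of some $\sigma_j\neq\sigma_i$ whose lengths both go to infinity. Your exit-time estimate actually pushes the other way: if $\gamma_n$ has tracked $\sigma_i$ for a long time, then at the transit its tangent is near $\pm\dot\sigma_i$, hence at angle $\ge\delta_0/2$ from $\pm\dot\sigma_j$, so by the very bound you quote it leaves $N_{\varepsilon_n}(\sigma_j)$ in time $O(\varepsilon_n)\to 0$. Nothing you wrote rules out the pattern ``long $\sigma_i$-stretch, $O(\varepsilon_n)$-grazes of other geodesics, long $\sigma_i$-stretch, \dots''; recentered at any transit, the Ascoli--Arzel\`a limit would then just be $\sigma_i$, and no contradiction emerges. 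The $\omega$-limit framing has the same hole at the ``no-transit'' step you yourself flag, plus a loose end: $\pi(L)\subseteq\overline{N_\varepsilon(\sigma_{i_0})}$ only yields $\limsup_t d(\gamma(t),\sigma_{i_0})\le\varepsilon$, not the strict inequality in (2).

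The paper replaces ``long stretches'' by a \emph{fixed} time window. It fixes small disjoint convex neighborhoods $N_p$ of the finitely many intersection points of $\mathcal G$, sets $V=\bigcup_p N_p$, and records two facts: a geodesic arc inside $V$ has length at most some $T_1$, while consecutive visits to $V$ are at least $T_2>T_1$ apart; and outside $V$, for $\varepsilon_n$ small, the geodesic of $\mathcal G$ that $\gamma_n(t)$ is $\varepsilon_n$-close to is unique. Since $\gamma_n$ genuinely changes this nearby geodesic between some pair of consecutive outside-$V$ intervals, for any fixed $T\in(T_1,T_2)$ one finds $a_n$ with $\gamma_n(a_n),\gamma_n(a_n+T)\notin V$ and $\varepsilon_n$-close to distinct $\sigma_i,\sigma_j$. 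An Ascoli--Arzel\`a limit ray then lies in $\bigcup_l\sigma_l$, hence in a single $\sigma_l$, while $\gamma(0)$ and $\gamma(T)$ (both outside $V$, hence away from all intersections) lie on different $\sigma_i,\sigma_j$: contradiction. The fixed window $T$ is exactly the device that stands in for your unproven ``stretches $\to\infty$''.
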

\begin{proof}
	To begin with, notice that if the geodesics in $\mathcal{G}$ are disjoint then the conclusion follows by taking $\varepsilon$ smaller than half the smallest distance between two geodesics in $\mathcal{G}$. This is indeed positive as the geodesics are closed.
	We now consider the case in which there are intersections.

	Let $\mathcal I \subset M$ be the set of intersections between (at least two) geodesics of $\mathcal G$; it is a finite set of points.
	For every  $p \in \mathcal I$ fix a small closed convex neighbourhood $N_p$, in such a way that $N_p \cap N_q =\emptyset$ whenever $p \neq q$.
	Define $T_p$ to be the maximal length of a geodesic segment contained in $N_p$, and for every pair $(p,q)\in \mathcal I \times \mathcal I$, define $T_{pq}$ as follows:
	\begin{itemize}
		\item If $p \neq q$, then $T_{pq}$ is the distance between $N_p$ and $N_q$;
		\item If $p = q$, then $T_{pp}$ is the infimum of the lengths of geodesic (\emph{locally} distance-minimizing) segments with endpoints in $N_p$ and not completely contained in $N_p$. Let $T_{pp}=\infty$ if there is no such geodesic.
	\end{itemize}

	Let $T_1 = \max\{T_p : p\in\mathcal{I}\}$ and $T_2 = \min\{T_{pq}: (p,q)\in\mathcal{I}\times\mathcal{I}\}$.
	If the neighbourhoods have been chosen small enough (as we assume), then $T_1 < T_2$.
	We denote by $V$ the (disjoint) union of the neighbourhoods $N_p$.
	Let $\varepsilon_0 > 0$ be such that, for every $\sigma_i \in \mathcal G$ and for every point lying on $\sigma_i$, but not belonging to $V$, its distance to geodesics in $\mathcal{G}\setminus \{\sigma_i \}$ is greater than $2\varepsilon_0$.

	We prove the statement by contradiction, assuming that there is a sequence $\{\varepsilon_n\}_{n\in \N_{\ge 1}}$ of positive numbers converging to $0$ and a sequence of unit-speed geodesic rays $\gamma_n$ such that:
	\begin{enumerate}
		\item\label{item: first} The ray $\gamma_n$ is definitively $\varepsilon_n$-close to the union $\sigma_1 \cup \dots \cup \sigma_k$;
		\item\label{item: second} For every $\sigma_i \in \mathcal{G}$ and every $n\in\N_{\ge 1}$, the ray $\gamma_n$ is outside the $\varepsilon_n$-neighbourhood of $\sigma_i$ for arbitrarily large times.
	\end{enumerate}
	We can assume that $\varepsilon_n < \varepsilon_0$ for every $n \in \N_{\ge 1}$.
	Putting together these two conditions with the definition of $\varepsilon_0$, one gets that the geodesic ray $\gamma_n$ must enter $V$ infinitely many times: indeed $\gamma_n$ is (definitely) always close to at least one geodesic of $\mathcal G$ (\ref{item: first}), but not always the same geodesic (\ref{item: second}); this means that there are infinitely many times in which $\gamma_n$ is close to at least two geodesics of $\mathcal G$ (this certainly happens when $\gamma_n$ switches the geodesic to which it is close).
	More precisely, the set
	\[\{t \in [0,+\infty): \gamma_n(t) \in V\}\]
	is an infinite disjoint union of closed intervals of length at most $T_1$, and any two of these intervals are at distance at least $T_2$ apart.

	Let $T$ be a real number such that $T_1 < T < T_2$.
	The discussion above implies the existence of $a_n \in [0,+\infty)$ such that the two points $\gamma_n(a_n), \gamma_n(a_n+T)$ do not belong to $V$ and are $\varepsilon_n$-close to two distinct geodesics of $\mathcal{G}$.
	Up to precomposing the rays with translations of $[0,+\infty)$, we can assume that $a_n = 0$ for every $n \in \N_{\ge 1}$ (i.e., we redefine the domain of $\gamma_n$ in such a way that this ray starts at $\gamma_n(a_n)$).

	By passing to a subsequence if necessary, we can assume (by the Ascoli-Arzelà Theorem) that the sequence of rays $\gamma_n$ converges pointwise to a geodesic ray $\gamma$.
	Since $\varepsilon_n$ goes to $0$, this limit ray must be contained in $\sigma_1 \cup \dots \cup \sigma_k$ but, being a geodesic itself, it has to be contained in some $\sigma_i$.
	On the other hand, $\gamma(0) = \lim_{n\to\infty}\gamma_n(0)$ and $\gamma(T) = \lim_{n\to\infty}\gamma_n(T)$ cannot belong to the same $\sigma_i$, otherwise $\gamma_n(0)$ and $\gamma_n(T)$ would be both $\varepsilon_0$-close to $\sigma_i$, for $n$ big enough.
	This provides a contradiction.
\end{proof}

\begin{lemma}
	\label{lem: nice_cut_geodesic}
	Let $\xi_1, \xi_2$ and $\xi_3$ be three pairwise distinct points of $\bi \Hp$.
	Then, there exists a simple closed geodesic in $\Sigma$ such that one of its lifts in $\Hp$ separates $\xi_1$ and $\xi_2$ from $\xi_3$, meaning that the endpoints of this lift divide $\bi\Hp$ into two open arcs, one containing $\xi_3$ and the other containing $\xi_1$ and $\xi_2$.
	Moreover, $\sigma$ can be chosen in such a way that none of its lifts in $\Hp$ is asymptotic to $\xi_1$, $\xi_2$, or $\xi_3$.
\end{lemma}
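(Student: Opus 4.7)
Let $A_i \subset \bi\Hp$ denote, for $i \in \{1,2,3\}$, the open arc bounded by the two points of $\{\xi_1,\xi_2,\xi_3\} \setminus \{\xi_i\}$ that does not contain $\xi_i$. A lift of a simple closed geodesic separates $\xi_3$ from $\{\xi_1,\xi_2\}$ if and only if its two endpoints at infinity lie one in $A_2$ and the other in $A_1$; equivalently, the endpoint pair belongs to the open set $U := A_2 \times A_1$ (which consists of pairs of distinct points since $A_1 \cap A_2 = \emptyset$). So the task reduces to producing a simple closed geodesic $\sigma$ with some lift whose endpoint pair lies in $U$ and whose $\Gamma$-orbit of lift endpoints avoids $\{\xi_1,\xi_2,\xi_3\}$.

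The core of the plan is the density, in the space of pairs of distinct points of $\bi\Hp$, of the set $\mathcal S$ of endpoint pairs of lifts of simple closed geodesics of $\Sigma$. I would prove this density by a ping-pong argument exploiting the dynamics of Dehn twists on $\bi\Hp$ described in Section~\ref{Sec:mcgonH2b}: starting from any non-separating simple closed geodesic $\sigma_0$ with some lift having endpoint pair $(p_0,q_0)$ and iteratively Dehn-twisting along suitably chosen simple closed curves, I obtain new simple closed geodesics whose axis endpoint pairs can be controlled. Using that the attractors and repellors of the associated earthquakes sit densely on the boundaries of the affected regions as the defining curve and the choice of central plate vary, these endpoint pairs can be pushed arbitrarily close to any prescribed pair. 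Granting density, $U$ meets $\mathcal S$. Moreover, a single $\Gamma$-orbit of axis endpoints of a closed geodesic is closed in the space of pairs (it projects to a closed point of the two-dimensional quotient parametrizing oriented geodesics on $\Sigma$), so no finite union of such orbits can be dense in an open set; hence $\mathcal S \cap U$ must be contributed by infinitely many distinct simple closed geodesics.

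For the non-asymptoticity clause, a point $\xi \in \bi\Hp$ is an endpoint of some lift of $\sigma$ if and only if $\xi$ is fixed by a conjugate of the primitive element $[\sigma] \in \Gamma$, which happens exactly when $[\sigma]$ is conjugate to $h^{\pm 1}$, where $h$ generates the (possibly trivial) cyclic stabilizer $\mathrm{Stab}_\Gamma(\xi)$. Therefore, at most three primitive conjugacy classes---one per $\xi_i$---are excluded. Among the infinitely many candidates produced above, I can then pick $\sigma$ whose conjugacy class avoids these three. The main obstacle is the density claim for $\mathcal S$; the rest is bookkeeping.
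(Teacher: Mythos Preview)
Your outline is sound and genuinely different from the paper's route. The reduction to the open set $U = A_2 \times A_1$ is correct, as is the non-asymptoticity bookkeeping: each $\xi_i$ is fixed by at most one primitive conjugacy class in $\Gamma$, and the $\Gamma$-orbit of axis endpoints of a single closed geodesic is a discrete (hence nowhere dense) subset of $(S^1)^2\setminus\Delta$, so if $\mathcal S$ meets $U$ at all it must do so via infinitely many distinct conjugacy classes, and you can discard the at most three bad ones.

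The gap is exactly where you locate it: the density of $\mathcal S$. Your Dehn-twist sketch is suggestive but not a proof. Iterating $\tau_{\sigma_1}$ on a simple curve $\sigma_0$ drives the lift-endpoints of the resulting simple curves toward attractors of the boundary map $\partial\eqs$, but those attractors are specific endpoints of lifts of $\sigma_1$; you have not explained how to choose $\sigma_1$ and the central plate so that the two relevant attractors land simultaneously near a prescribed pair $(p,q)$ while $(p_0,q_0)$ sits in the right affected regions. This can be made to work (or one can cite the density of simple closed curves in $\mathrm{PML}(\Sigma)$), but as written it is an assertion, not an argument. The paper instead avoids density entirely: it fixes a \emph{finite} family $\mathcal G$ of simple closed geodesics cutting $\Sigma$ into polygons (chosen upfront to avoid the three forbidden asymptotic classes), then uses a visual-angle estimate. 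Along a geodesic ray toward $\xi_3$, Lemma~\ref{lemma:fellow_geodesic} supplies points $\tilde\gamma(t)$ at distance $\ge\varepsilon$ from every lift in $\tilde{\mathcal G}$; from such a point each lift subtends angle $<\pi-\delta$, while for large $t$ the points $\xi_1,\xi_2$ are seen at angle $>\pi-\delta$ from $\xi_3$. Hence the side of the ambient polygon cutting off $\xi_3$ cannot also cut off $\xi_1$ or $\xi_2$. Your approach is more conceptual and, once the density is properly established, arguably cleaner; the paper's is self-contained and draws the separating geodesic from a fixed finite list.
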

\begin{proof}
	Let $\mathcal{G} = \{\sigma_1, \dots, \sigma_k\}$ be a finite set of simple closed geodesics in $\Sigma$ cutting the surface into polygons (e.g., hexagons).
	We denote by $\tilde{\mathcal{G}}$ the (infinite) set of geodesic lines in $\Hp$ that project onto geodesics in $\mathcal{G}$.
	Notice that a point in $\bi \Hp$ can serve as the endpoint for at most one geodesic projecting to a simple closed geodesic on the surface $\Sigma$. Consequently, there can be a maximum of three simple closed geodesics admitting a lift asymptotic to $\xi_1,\xi_2$, or $\xi_3$. In particular, we can assume, up to changing the choice of $\mathcal{G}$, that none of the lines in $\tilde{\mathcal{G}}$ is asymptotic to $\xi_1, \xi_2$ and $\xi_3$.

	Let $\varepsilon > 0$ be such that the conclusion of Lemma \ref{lemma:fellow_geodesic} holds and suppose it is small enough so that pairs of distinct geodesics in $\tilde{\mathcal{G}}$ projecting onto the same geodesic in $\mathcal{G}$ are at distance at least $2\varepsilon$ apart.

	The following construction is described in Figure \ref{fig: lemma42}.
	Let $\tilde\gamma:[0,+\infty) \to \Hp$ be a geodesic ray asymptotic to $\xi_3$.
	For $i \in \{1,2\}$, we denote by $\alpha_i(t) \in [0,\pi]$ the angle between the rays that, starting at $\tilde\gamma(t)$, go towards $\xi_i$ and $\xi_3$.
	We have that $\alpha_1(t)\to \pi$ and $\alpha_2(t) \to \pi$ as $t \to +\infty$.

	There are arbitrarily large times $t\in[0,+\infty)$ such that $\tilde\gamma(t)$ has distance at least $\varepsilon$ from the union of the lines in $\tilde{\mathcal{G}}$:
	indeed, if this were not the case, then the projection of $\tilde\gamma$ on $\Sigma$ would stay definitively $\varepsilon$-close to $\mathcal{G}$, and by Lemma \ref{lemma:fellow_geodesic} it would stay definitively $\varepsilon$-close to some specific $\sigma_i \in \mathcal{G}$.
	But this would imply that $\tilde\gamma$ stays $\varepsilon$-close to some lift of $\sigma_i$ (recall that two different lifts are at distance at least $2\varepsilon$ apart), and this lift would then converge to $\xi_3$, a situation that we excluded at the beginning of the proof.

	Let $t \in [0,+\infty)$ be such that the point $\tilde\gamma(t)$ has distance at least $\varepsilon$ from the union of the lines in $\tilde{\mathcal{G}}$.
	We claim that there exists a $\delta>0$, depending on $\varepsilon$, that satisfies the following property:
	for every geodesic $\tilde \sigma$ in $\tilde{\mathcal G}$, the angle $\beta$ centered in $\tilde\gamma(t)$ between the extrema $\xi^+,\ \xi^-\in\bi\Hp$ of $\tilde\sigma$, is smaller than $\pi-\delta$.
	Indeed, the geodesic ideal triangle $T$ with vertices $\xi^+,\ \xi^-,\ \gamma(t)$ satisfies
	\[\pi -\beta = {\rm{Area}} (T) >{\rm{Area}} (T \cap B_{\varepsilon} (\gamma(t))) =\beta \cdot \frac{{\rm{Area}} (B_\varepsilon(\gamma(t)))} {2 \pi},\]
	where the first equality holds because two of the angles of $T$ are $0$ and the last one is $\beta$. This inequality proves the claim.

	If $t$ is big enough, we can assume that $\alpha_1(t) > \pi-\delta$ and $\alpha_2(t) > \pi-\delta$.
	The point $\tilde\gamma(t)$ lies inside a convex polygon whose sides are segments of lines in $\tilde{\mathcal{G}}$.
	If $\ell \in \tilde{\mathcal{G}}$ is one of these lines, its endpoints divide $\bi\Hp$ into two connected components: a ``big'' one, corresponding to the half-plane containing $\tilde\gamma(t)$, and a ``small'' one, corresponding to the other half-plane into which $\Hp$ is cut by $\ell$.
	The small one, seen from $\tilde\gamma(t)$, is less than $\pi-\delta$ wide.

	Among the lines forming the polygon, there must be one such that the corresponding ``small'' component of $\bi\Hp$ contains $\xi_3$.
	This component cannot contain $\xi_1$ or $\xi_2$, since from the point of view of $\tilde\gamma(t)$ they are at distance greater than $\pi-\delta$ from $\xi_3$, and, on the other hand, the component is less than $\pi-\delta$ wide. Thus, this line is a lift of a geodesic in $\mathcal G$ that satisfies all the requirements of the statement.
\end{proof}

\begin{figure}[ht]
	\includegraphics[scale=1.2]{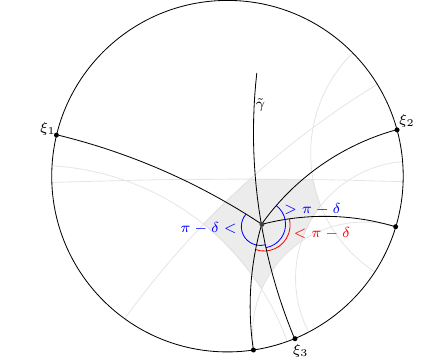}
	\caption{Cartoon of the proof of Lemma \ref{lem: nice_cut_geodesic}. The grey geodesics represents some geodesics in $\tilde{\mathcal G}$; the area in grey represents one lift of one polygon in which the surface is cut; the two blue angles represents $\alpha_1(t)$ and $\alpha_2(t)$; the red angle is the visual angle $\beta$ that $\gamma(t)$ sees when looking to one of the grey geodesic in $\tilde{\mathcal G}$. }
	\label{fig: lemma42}
\end{figure}
\begin{lemma}\label{lemma: geodetica separante}
	Let $\xi_1,\xi_2,\xi_3 \in S^1$ be three distinct points. Then there is an earthquake $\eqs$ along a simple closed geodesic $\sigma \subset \Sigma$ such that:
	\begin{enumerate}
		\item\label{item: separating affected region} There is an affected region $\mathcal T$ of $\eqs$ such that $\xi_3$ is contained in the interior of $\bi \mathcal T$ and $\xi_1,\xi_2 \notin \bi \mathcal T$;
		\item\label{item: additional condition} The points $\xi_1$ and $\xi_2$ do not belong to the boundary at infinity of the same affected region of $\eqs$.
	\end{enumerate}
\end{lemma}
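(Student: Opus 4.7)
The plan is to first apply Lemma \ref{lem: nice_cut_geodesic} to obtain a simple closed geodesic $\sigma$ and a lift $\ell \subset \Hp$ separating $\xi_3$ from $\{\xi_1,\xi_2\}$, such that no lift of $\sigma$ is asymptotic to any of $\xi_1,\xi_2,\xi_3$. Let $H^-$ denote the halfplane bounded by $\ell$ containing $\xi_1,\xi_2$ at infinity, and take as candidate central plate $\mathcal P_0$ the plate of $\tilde\sigma$ adjacent to $\ell$ inside $H^-$. With this choice, condition (1) is immediate: the affected region $\mathcal T$ across $\ell$ is the opposite halfplane, whose boundary at infinity is an open arc containing $\xi_3$ in its interior (since $\ell$ is not asymptotic to $\xi_3$) and not containing $\xi_1$ or $\xi_2$.

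For (2), I would examine the geodesic $g\subset H^-$ with endpoints $\xi_1,\xi_2$. If $g$ crosses $\mathcal P_0$, then it enters and exits through two distinct boundary geodesics $\ell',\ell''\neq\ell$, placing $\xi_1$ and $\xi_2$ in the interiors of the boundaries at infinity of two distinct affected regions of $\mathcal P_0$, and (2) holds. Otherwise, $g$ lies in a single affected region $\mathcal R_1$ of $\mathcal P_0$, bounded by a geodesic $\ell_1\neq\ell$, and $\xi_1,\xi_2\in\bi\mathcal R_1$. In that case I would redefine the central plate as the plate $\mathcal P_1$ adjacent to $\ell_1$ on the $\mathcal R_1$-side: the affected region of $\mathcal P_1$ across $\ell_1$ is a halfplane that now contains $\mathcal P_0$, $\ell$ and $\xi_3$ in the interior of its boundary at infinity (and still not $\xi_1,\xi_2$), so (1) is preserved and the new central plate is strictly closer to $g$. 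Iterating, either we reach a plate $\mathcal P_n$ crossed by $g$ (and conclude as in the favourable case), or we obtain an infinite nested sequence of halfplanes $\mathcal R_1\supset\mathcal R_2\supset\cdots$, each containing $\xi_1,\xi_2$ at infinity and bounded by a lift $\ell_n$ of $\sigma$.

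The main obstacle is to rule out the infinite-iteration scenario. Assuming it occurs, the endpoints of $\ell_n$ on $S^1$ converge to $\xi_1$ and $\xi_2$ (otherwise the strict shrinkage of the $\mathcal R_n$ would stop), and one can check that the midpoints of $\ell_n$ in $\Hp$ stay bounded and converge to the midpoint of $g$; hence $\ell_n\to g$ uniformly on compact subsets of $\Hp$. Projecting to $\Sigma$, for every $L>0$ the central segment of $\ell_n$ of length $L$ projects onto an arc of $\sigma$ that is uniformly close to the corresponding segment of the projected geodesic $\gamma$ of $g$, provided $n$ is large enough; as $L$ can be taken arbitrarily large, both the forward and backward rays of $\gamma$ stay definitively $\varepsilon$-close to $\sigma$ for $\varepsilon$ as small as we wish. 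By Lemma \ref{lemma:fellow_geodesic} applied to $\mathcal G=\{\sigma\}$, we must then be in the definitive-closeness case of that dichotomy, and the standard fact that two geodesics on a closed hyperbolic surface remaining within bounded distance are asymptotic forces $\gamma$ to be asymptotic to $\sigma$ at both ends. Thus $\xi_1$ and $\xi_2$ would be endpoints of lifts of $\sigma$, contradicting the non-asymptoticity clause of Lemma \ref{lem: nice_cut_geodesic}; the iteration therefore terminates in finitely many steps.
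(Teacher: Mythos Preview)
Your overall strategy matches the paper's: start from Lemma \ref{lem: nice_cut_geodesic}, take the plate adjacent to the separating lift on the $\{\xi_1,\xi_2\}$-side, and if $\xi_1,\xi_2$ still share an affected region, push the central plate across the offending boundary towards the geodesic $g$ joining $\xi_1$ and $\xi_2$. The difference is in how you establish termination, and there your argument has a gap.

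The problematic step is the assertion that, in the infinite-iteration scenario, ``the endpoints of $\ell_n$ on $S^1$ converge to $\xi_1$ and $\xi_2$ (otherwise the strict shrinkage of the $\mathcal R_n$ would stop)''. Strict nesting of the closed arcs $\bi\mathcal R_n$ does not force their intersection to be the arc with endpoints $\xi_1,\xi_2$; the intersection could a priori be a strictly larger arc $[a,b]\supsetneq[\xi_1,\xi_2]$, with the $\ell_n$ shrinking strictly while converging to the geodesic with endpoints $a,b$, and $g$ sitting safely beyond it. So this claim is unjustified as stated. (Once granted, the remainder of your argument works, though the detour through Lemma \ref{lemma:fellow_geodesic} is unnecessary: if $\ell_n\to g$ uniformly on compacta then $g$ lies in the closed set $\tilde\sigma$, hence is itself a lift of $\sigma$, already a contradiction.)

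The paper closes this in one line by a compactness observation that also fixes your gap. Fix any geodesic segment $s:[0,1]\to\Hp$ from a point of $\mathcal T$ to a point of $g$; since $\sigma$ is simple and closed it has a tubular neighbourhood in $\Sigma$, so distinct lifts of $\sigma$ are uniformly separated, and the compact image of $s$ meets only finitely many of them. One then takes the \emph{last} lift $\tilde\sigma$ crossed by $s$ whose endpoints still separate $\xi_3$ from $\{\xi_1,\xi_2\}$, and lets the central plate be the one containing $s(\bar t+\varepsilon)$ for small $\varepsilon>0$. If $\xi_1,\xi_2$ were still in a common affected region, its bounding geodesic would be another separating lift crossed by $s$ beyond $\tilde\sigma$, contradicting maximality of $\bar t$. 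This is exactly the plate your iteration is heading for, reached in a single step; and the same finiteness (each $\ell_n$ must cross $s$) instantly rules out your infinite-iteration scenario without any limit argument.
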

Note that $\xi_1$ and $\xi_2$ might either belong to the boundaries at infinity of two different affected regions, or (one of them or both) to the boundary at infinity of the central plate.
\begin{proof}
	By Lemma \ref{lem: nice_cut_geodesic}, there exist a simple closed geodesic $\sigma \subset \Sigma$ and an earthquake $\eqs'$ along $\sigma$ satisfying Condition (\ref{item: separating affected region}). Furthermore, we can assume that no lift of $\sigma$ in $\Hp$ is asymptotic to $\xi_1$, $\xi_2$, or $\xi_3$.
	We prove that, changing suitably the central plate, there is a (possibly different) earthquake along $\sigma$ satisfying also Condition (\ref{item: additional condition}).

	Let $\alpha$ be the geodesic line with endpoints $\xi_1$ and $\xi_2$.
	Let $s\colon [0,1]\to\Hp$ be a geodesic segment with $s(0)\in \mathcal T$ and $s(1)\in \alpha$.
	By compactness, the path $s$ intersects a finite number of lifts of $\sigma$. Let $\overline{t } \in[0,1]$ be the largest time $t$ for which the following happens: the point $s(t)$ belongs to some lift of $\sigma$ whose points at infinity separate $\xi_3$ from $\xi_1$ and $\xi_2$, as in Lemma \ref{lem: nice_cut_geodesic} (see Figure \ref{fig: trova il dt appropriato}).
	Note that this happens at least once (when $s$ exits $\mathcal{T}$).
	Call $\tilde{\sigma}$ the lift of $\sigma$ that intersects $s$ in $s(\overline{t })$.
	\begin{figure}[ht]
		\includegraphics[scale=1.3]{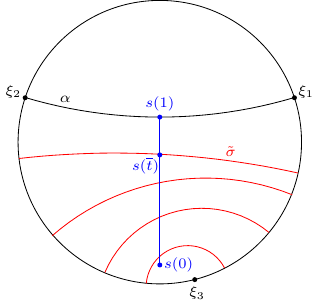}
		\caption{The geodesic $\tilde \sigma$ is the last lift of $\sigma$ intersecting the path $s$ and separating $\xi_3$ from $\xi_1$ and $\xi_2$}
		\label{fig: trova il dt appropriato}
	\end{figure}

	Consider the earthquake $\eqs$ along $\sigma$ whose central plate contains $s(\overline{t }+\varepsilon)$ for small values of $\varepsilon>0$.
	Now, $\eqs$ satisfies also Condition (\ref{item: additional condition}). Indeed, if $\xi_1$ and $\xi_2$ were still contained in the boundary at infinity of the same affected region $\mathcal R$ of $\eqs$, then the geodesic bounding $\mathcal R$ would be a lift of $\sigma$ lying between $\alpha$ and $\tilde \sigma$, thus it would intersect $s$ in $s(t')$, with $t'>\overline{t}$, and we would get a contradiction.
\end{proof}

\begin{lemma}\label{lemma: difficile}
	Let $\mathcal R_1$ and $\mathcal R_2$ be distinct affected regions of an earthquake $\eqs$ along a closed geodesic $\sigma\subset\Sigma$.
	Let $I$ be one of the two connected components of $S^1 \setminus (\bi\mathcal R_1 \cup \bi\mathcal R_2)$.
	If $f \in \calC$ has the 3-region property, then $f$ is constant on $\bi R_1\times\bi  \mathcal R_2 \times I\subseteq (S^1)^3$.
\end{lemma}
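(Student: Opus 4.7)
The plan is to combine the 3-region property for the given earthquake $\eqs$ with the 3-region property for earthquakes along other, carefully chosen simple closed geodesics. The starting observation is that for every affected region $\mathcal R_3$ of $\eqs$ with $\bi\mathcal R_3 \subseteq I$, the region $\mathcal R_3$ is automatically distinct from $\mathcal R_1$ and $\mathcal R_2$, so the 3-region property applied to $\eqs$ yields that $f$ takes a single value $c(\mathcal R_3)$ on $\bi\mathcal R_1 \times \bi\mathcal R_2 \times \bi\mathcal R_3$. Since $\bigcup_{\mathcal R_3 \subseteq I} \bi\mathcal R_3$ is dense in $I$ and $f$ is continuous on $\bi\mathcal R_1 \times \bi\mathcal R_2 \times I \subseteq (S^1)^3 \setminus \Delta$, the lemma reduces to showing that the constants $c(\mathcal R_3)$ do not depend on $\mathcal R_3$.

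To equate $c(\mathcal R_3)$ and $c(\mathcal R_3')$ for any two such affected regions, fix $\eta \in \bi\mathcal R_3$ and $\eta' \in \bi\mathcal R_3'$. The strategy is to build a second earthquake $\tilde\tau_{\sigma'}$ along a simple closed geodesic $\sigma' \subset \Sigma$, together with three pairwise distinct affected regions $\mathcal S_1, \mathcal S_2, \mathcal S_3$ of $\tilde\tau_{\sigma'}$ such that $\xi_1 \in \bi\mathcal S_1$, $\xi_2 \in \bi\mathcal S_2$, and $\{\eta, \eta'\} \subseteq \bi\mathcal S_3$. Granted such a construction, the 3-region property applied to $\tilde\tau_{\sigma'}$ immediately gives $f(\xi_1, \xi_2, \eta) = f(\xi_1, \xi_2, \eta')$, i.e., $c(\mathcal R_3) = c(\mathcal R_3')$.

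The earthquake $\tilde\tau_{\sigma'}$ will be produced by choosing $\sigma'$ so that its preimage in $\Hp$ admits two geodesic lines $\ell$ and $\ell'$, both sides of a single plate $\mathcal Q$, with the following properties: the endpoints of $\ell$ on $S^1$ separate $\{\eta, \eta'\}$ from $\{\xi_1, \xi_2\}$, the endpoints of $\ell'$ separate $\xi_1$ from $\xi_2$, and no lift of $\sigma'$ is asymptotic to any of the four points $\xi_1, \xi_2, \eta, \eta'$. Declaring $\mathcal Q$ to be the central plate of $\tilde\tau_{\sigma'}$, the affected region across $\ell$ is the half-plane whose boundary at infinity contains both $\eta$ and $\eta'$ in its interior (this is $\mathcal S_3$), while the affected regions across $\ell'$ and a further side of $\mathcal Q$ contain $\xi_1$ and $\xi_2$ respectively in the interiors of their boundaries at infinity, yielding distinct $\mathcal S_1, \mathcal S_2 \neq \mathcal S_3$. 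The hardest part will be producing $\sigma'$ with this combinatorial pattern of lifts: I expect it to follow from a four-point variant of Lemma \ref{lem: nice_cut_geodesic}, via the same polygonal/angular density argument (used once to cut $\{\eta, \eta'\}$ away from $\{\xi_1, \xi_2\}$, and once more to separate $\xi_1$ from $\xi_2$) combined with the adjustability of the central plate already exploited in the proof of Lemma \ref{lemma: geodetica separante}.
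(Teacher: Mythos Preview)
Your opening reduction is correct and matches the paper's starting point: by the 3-region property for $\eqs$ together with density and continuity, it suffices to show that the constants $c(\mathcal R_3)$ coincide.

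The gap is in your second step. You need, for an \emph{arbitrary} pair $\eta,\eta'\in I$, a single simple closed geodesic $\sigma'$ admitting a lift $\ell$ whose endpoints separate $\{\eta,\eta'\}$ from $\{\xi_1,\xi_2\}$. But the polygonal/angular argument of Lemma~\ref{lem: nice_cut_geodesic} works by travelling far along a ray towards \emph{one} target point and finding a polygon side whose ``small'' arc contains that point; by design the small arc has visual width less than $\pi-\delta$, so it traps exactly one prescribed point, not two that may sit near opposite ends of $I$. Running the argument ``once more to separate $\xi_1$ from $\xi_2$'' will in general produce a lift of a \emph{different} curve in the family $\mathcal G$, so you do not get both lifts bounding one plate of a single $\sigma'$. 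A second issue compounds this: even granting such an $\ell$, you further require $\xi_1,\xi_2$ to lie in distinct affected regions of the plate adjacent to $\ell$. The adjustability you quote from Lemma~\ref{lemma: geodetica separante} only ensures that $\xi_1,\xi_2$ are not in the \emph{same} affected region; its proof explicitly allows one or both of them to land in the Cantor set $\bi\mathcal Q$ of the central plate, and your non-asymptoticity hypothesis does not prevent this.

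The paper avoids these obstacles by replacing your global pairwise comparison with a connectedness argument: the set $\mathcal S\subseteq I$ where $f(\,\cdot\,,\,\cdot\,,\xi_3)$ equals the reference value is nonempty and closed, and one shows it is open. For openness at $\xi_3$ only the three-point Lemma~\ref{lemma: geodetica separante} is needed, giving a second earthquake $\eqsb$ with $\xi_3$ in the interior of some $\bi\mathcal T_3$. Crucially, the paper never asks that $\xi_1^o,\xi_2^o$ themselves sit in affected regions of $\eqsb$; it uses density to pick distinct affected regions $\mathcal T_1,\mathcal T_2$ of $\eqsb$ with $\bi\mathcal T_i\cap\bi\mathcal R_i\neq\emptyset$, and chains the 3-region properties of $\eqs$ and $\eqsb$ through these intersections. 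This ``bridging through intersections'' is the device that lets the proof go through with only the three-point separation lemma already in hand, and it is what your direct-comparison scheme is missing.
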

\begin{proof}
	Let $\mathcal{R}_3$ be an affected region of $\eqs$ whose boundary at infinity is contained in $I$, and for $i \in \{1,2,3\}$ fix a point $\xi_i^o$ in the interior of $\bi\mathcal{R}_i$.
	The set
	\[\mathcal S = \{\xi_3 \in I\,|\,f(\xi_1,\xi_2,\xi_3)= f(\xi_1^o,\xi_2^o,\xi_3^o)\quad \forall (\xi_1,\xi_2) \in \bi\mathcal{R}_1 \times \bi\mathcal{R}_2\}\,\]
	is nonempty (by the 3-region property) and closed in $I$, since the map $f$ is continuous on $\bi \mathcal R_1 \times\bi  \mathcal R_2 \times I$.
	We want to show that, under our assumptions, $\mathcal S$ is also open: this would imply $\mathcal{S} = I$, proving the lemma.

	Let $\xi_3 \in \mathcal S$.
	We want to find a neighbourhood $U \subseteq I$ of $\xi_3$ contained in $\mathcal S$.
	By applying Lemma \ref{lemma: geodetica separante} to the triple $(\xi_1^o,\xi_2^o,\xi_3)$, we find an earthquake $\eqsb$ relative to a simple closed geodesic
	in $\Sigma$ such that the following conditions hold:
	\begin{itemize}
		\item There is an affected region $\mathcal T_3$ of  $\eqsb$ such that $\xi_3$ is contained in the interior of $\bi \mathcal T_3$ and $\xi_1^o,\xi_2^o \notin \bi\mathcal T_3$;
		\item $\xi_1^o$ and $\xi_2^o$ are not contained in the boundary at infinity of the same affected region of $\eqsb$.
	\end{itemize}
	Since the union of the boundaries of affected regions of a given earthquake is dense in $\bi \Hp$, there are two distinct affected regions $\mathcal T_1$ and $\mathcal T_2$ of $\eqsb$ such that $\bi \mathcal T_i \cap \bi \mathcal R_i$ is nonempty for $i \in \{1,2 \}$ (see Figure \ref{fig: lemma-difficile}).
	Let us set $U = \bi \mathcal T_3 \cap I$, take an affected region $\mathcal{R}$ of $\eqs$ whose boundary at infinity intersects $U$, and consider $\xi \in U \cap \bi\mathcal{R}$.
	\begin{figure}[ht]
		\includegraphics[scale=1.3]{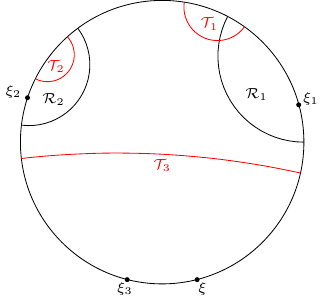}
		\caption{The affected regions $\mathcal T_1, \mathcal T_2$ and $\mathcal T_3$ of $\eqsb$ are distinct. Moreover, for $i \in \{1,2\}$, the boundary  at infinity of $\mathcal T_i$ has nonempty intersection with $\bi\mathcal R_i$.}
		\label{fig: lemma-difficile}
	\end{figure}
	Since $\mathcal R_1, \mathcal R_2$ and $\mathcal R$ are distinct and $f$ has the 3-region property, we get, for every
	$(\xi_1,\xi_2) \in \bi\mathcal{R}_1 \times \bi\mathcal{R}_2$,
	\begin{align*}
		f(\xi_1,\xi_2,\xi)\equiv &
		\restr{f} {\bi\mathcal R_1 \times \bi\mathcal  R_2 \times \bi\mathcal R}                                                                                    \\
		\equiv                   & \restr{f} {\bi(\mathcal R_1 \cap\mathcal T_1) \times \bi(\mathcal  R_2 \cap \mathcal T_2)\times \bi(\mathcal R\cap\mathcal T_3)} \\
		\equiv                   & \restr{f} {\bi\mathcal T_1 \times \bi\mathcal  T_2 \times \bi\mathcal T_3}                                                       \\
		\equiv                   & f(\xi_1',\xi_2',\xi_3)= f(\xi_1^o,\xi_2^o,\xi_3^o),
	\end{align*}
	where the symbol $\equiv$ means that the function is constant and equal to the other side.
	In the last line, $(\xi_1',\xi_2')$ is any pair in $\bi(\mathcal{R}_1\cap\mathcal{T}_1) \times \bi(\mathcal{R}_2\cap\mathcal{T}_2)$.
	Using again the fact that the union of the boundaries at infinity of affected regions of a given earthquake is dense in $\bi \Hp$ (and so in the open subset $U$), the continuity of $f$ ensures that $f(\xi_1,\xi_2,\xi')=f(\xi_1^o,\xi_2^o,\xi_3^o)$ holds for every point $\xi' \in U$.
	Therefore, $U \subset \mathcal S$.
\end{proof}
\begin{lemma}
	\label{lem: continuous invariants are locally constant}
	Let $f \in \calC$. If $f$ has the 3-region property, then it is locally constant on $(S^1)^3\setminus \Delta$.
\end{lemma}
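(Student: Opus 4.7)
Given $(\xi_1, \xi_2, \xi_3) \in (S^1)^3 \setminus \Delta$, I will exhibit an open neighbourhood on which $f$ is constant. Applying Lemma \ref{lemma: geodetica separante}, I obtain an earthquake $\eqs$ along a simple closed geodesic, with central plate $\mathcal P$ and affected regions such that $\xi_3 \in \mathrm{int}(\bi \mathcal T_3)$ for some affected region $\mathcal T_3$; neither $\xi_1$ nor $\xi_2$ lies in $\bi \mathcal T_3$; and $\xi_1, \xi_2$ do not share the boundary at infinity of any single affected region. I distinguish three cases according to whether $\xi_1, \xi_2$ lie in interiors of affected region boundaries or in the Cantor set $\bi \mathcal P$.

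If both $\xi_1, \xi_2$ lie in the interiors of affected region boundaries $\mathcal T_1, \mathcal T_2$, the three regions $\mathcal T_1, \mathcal T_2, \mathcal T_3$ are pairwise distinct by the separation, and the 3-region property of Definition \ref{3regprop} yields constancy of $f$ on $\mathrm{int}(\bi \mathcal T_1) \times \mathrm{int}(\bi \mathcal T_2) \times \mathrm{int}(\bi \mathcal T_3)$, a neighbourhood. If, up to swapping, $\xi_1 \in \mathrm{int}(\bi \mathcal T_1)$ and $\xi_2 \in \bi \mathcal P$, the separation forces $\xi_2 \notin \bi \mathcal T_1$, so $\xi_2$ lies in an open component $I$ of $S^1 \setminus (\bi \mathcal T_1 \cup \bi \mathcal T_3)$; Lemma \ref{lemma: difficile} combined with the alternating property of elements of $\calC$ yields constancy on the neighbourhood $\mathrm{int}(\bi \mathcal T_1) \times I \times \mathrm{int}(\bi \mathcal T_3)$ of the triple.

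In the remaining hard case both $\xi_1, \xi_2 \in \bi \mathcal P$. I pick pairwise disjoint open neighbourhoods $U_1, U_2, V$ of $\xi_1, \xi_2, \xi_3$, with $V \subseteq \mathrm{int}(\bi \mathcal T_3)$ and $U_1, U_2$ disjoint from $\bi \mathcal T_3$, and consider the subset $G \subseteq U_1 \times U_2 \times V$ of triples $(\eta_1, \eta_2, \eta_3)$ for which at least one of $\eta_1, \eta_2$ is not in $\bi \mathcal P$. Since $\bi \mathcal P$ is nowhere dense, $G$ is open and dense. One verifies that $G$ is path-connected: although each $U_i \setminus \bi \mathcal P$ is a disjoint union of open intervals, one moves between their components by temporarily placing the other coordinate at a generic point of the other $U_j \setminus \bi \mathcal P$. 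At every $(\eta_1, \eta_2, \eta_3) \in G$ one of the previous cases applies (after possibly passing to a fresh earthquake adapted to $(\eta_1, \eta_2, \eta_3)$ by reinvoking Lemma \ref{lemma: geodetica separante}), so $f$ is locally constant on $G$; since $G$ is connected, $f$ is constant on $G$. Continuity of $f$ on $U_1 \times U_2 \times V \subseteq (S^1)^3 \setminus \Delta$ together with the density of $G$ then gives that $f$ is constant on the whole neighbourhood $U_1 \times U_2 \times V$, as required.

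The main obstacle is the hard case: the tools of the easy and intermediate cases cannot be applied directly once both $\xi_1, \xi_2$ lie on the Cantor set $\bi \mathcal P$, and they must be combined with a density-and-continuity argument. Two technical steps carry most of the weight: verifying the path-connectedness of $G$ in spite of each $U_i \setminus \bi \mathcal P$ being disconnected, and ensuring that the easy or intermediate case really does apply at every point of $G$, possibly after replacing $\eqs$ by a different earthquake tailored to the point under consideration.
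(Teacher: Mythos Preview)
Your case analysis is more elaborate than necessary, and the hard case carries a gap that you yourself flag but do not close. The problematic step is the assertion that at every $(\eta_1,\eta_2,\eta_3)\in G$ ``one of the previous cases applies (after possibly passing to a fresh earthquake)''. Neither option works as stated. With the \emph{original} earthquake $\eqs$, condition~(\ref{item: additional condition}) of Lemma~\ref{lemma: geodetica separante} was secured only for the original pair $(\xi_1,\xi_2)$, not for $(\eta_1,\eta_2)$; nothing prevents a single affected region $\mathcal T$ from having $\bi\mathcal T$ meeting both $U_1$ and $U_2$, in which case $\eta_1,\eta_2$ could lie in the same $\bi\mathcal T$ and neither Case~1 nor Case~2 applies. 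Reinvoking Lemma~\ref{lemma: geodetica separante} for $(\eta_1,\eta_2,\eta_3)$ gives a \emph{new} central plate $\mathcal P'$, and you have no control over whether $\eta_1,\eta_2\in\bi\mathcal P'$; you may land right back in Case~3. The gap is repairable: using condition~(\ref{item: additional condition}) and the Cantor structure of $\bi\mathcal P$, one can find an affected region $\mathcal R_a$ with $\bi\mathcal R_a$ contained in the open arc from $\xi_1$ to $\xi_2$ not meeting $\bi\mathcal T_3$, and then shrink $U_1,U_2$ into distinct components of $S^1\setminus(\bi\mathcal R_a\cup\bi\mathcal T_3)$. With this choice no single affected region can touch both $U_1$ and $U_2$, and the original earthquake suffices at every point of $G$. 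But this extra step is essential and is missing from your proposal.

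The paper sidesteps the whole case split. Rather than trying to place $\xi_1$ and $\xi_2$ inside affected regions, it inserts two auxiliary affected regions $\mathcal R_1,\mathcal R_2$ \emph{between} $\xi_1$ and $\xi_2$ (on the arc not containing $\xi_3$), sets $U_i$ to be the component of $S^1\setminus(\bi\mathcal R_i\cup\bi\mathcal R_3)$ containing $\xi_i$, and applies Lemma~\ref{lemma: difficile} twice (once for each of the first two coordinates) to obtain
\[
\restr{f}{\bi\mathcal R_1'\times\bi\mathcal R_2'\times\bi\mathcal R_3}
\equiv
\restr{f}{\bi\mathcal R_1'\times\bi\mathcal R_2\times\bi\mathcal R_3}
\equiv
\restr{f}{\bi\mathcal R_1\times\bi\mathcal R_2\times\bi\mathcal R_3}
\]
for all affected regions $\mathcal R_i'$ with $\bi\mathcal R_i'\subseteq U_i$, and then concludes by density and continuity. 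This handles all positions of $\xi_1,\xi_2$ uniformly, with no connectedness argument and no re-invocation of Lemma~\ref{lemma: geodetica separante}.
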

\begin{proof}
	Let $(\xi_1,\xi_2,\xi_3)\in (S^1)^3\setminus \Delta$ and let $\eqs$ be an earthquake along $\sigma$ provided by Lemma \ref{lemma: geodetica separante}.
	Let us call $\mathcal R_3$ the affected region of $\eqs$ such that $\xi_3$ is contained in the interior of $\bi \mathcal R_3$ and $\xi_1,\xi_2 \notin \bi \mathcal R_3$.
	We assume the triple $(\xi_1,\xi_2,\xi_3)$ to be ordered counter-clockwise, the other case being completely analogous.
	Since boundaries at infinity of affected regions of $\eqs$ are dense in $S^1$, there exist two affected regions $\mathcal R_1, \mathcal R_2$ of $\eqs$ such that $\xi_1,\,\bi \mathcal R_1,\,\bi\mathcal R_2,\,\xi_2,\, \bi\mathcal R_3$ are in counter-clockwise order on $S^1$.
	Let $U_1$ be the connected component of $S^1 \setminus (\bi\mathcal R_1\cup \bi\mathcal R_3)$ containing $\xi_1$, and similarly let $U_2$ be the connected component of $S^1 \setminus (\bi\mathcal R_2\cup \bi\mathcal R_3)$ containing $\xi_2$.
	If $\mathcal R_1'$ and $\mathcal{R}_2'$ are affected regions of $\eqs$ such that $\bi\mathcal R_i'\subseteq U_i$, we have that
	\begin{align*}
		\restr{f}{\bi\mathcal R_1'\times \bi \mathcal R_2'\times \bi\mathcal R_3}
		 & \equiv \restr{f}{\bi\mathcal R_1'\times \bi \mathcal R_2\times \bi\mathcal R_3} \\
		 & \equiv \restr{f}{\bi\mathcal R_1\times \bi \mathcal R_2\times \bi\mathcal R_3},
	\end{align*}
	where we used that $f$ has the 3-region property and applied Lemma \ref{lemma: difficile} multiple times.
	Using again the fact that the boundaries at infinity of affected regions are dense in $S^1$ and that $f$ is continuous on $(S^1)^3\setminus \Delta$, we obtain that $f$ is constant on $U_1\times U_2\times \mathcal R_3$, which is a neighbourhood of $(\xi_1,\xi_2,\xi_3)$.
	This proves that the map $f$ is locally constant on $(S^1)^3\setminus \Delta$.
\end{proof}

We can now conclude the proof of Theorem \ref{thm: invariant continuous subspaces}.
Assume that there is a finite-dimensional subspace $V$ of $\calC$ that is invariant under the action $\mcg(\Sigma)\acts \calC$ and let $f \in V$.
By Lemma \ref{lemma: constant on affected region}, $f$ has the 3-region property.
We know that $(S^1)^3\setminus \Delta$ has two connected components: one corresponding to positively oriented triples and one corresponding to negatively oriented triples.
By Lemma \ref{lem: continuous invariants are locally constant}, $f$ is constant on these connected components, and, since it is alternating, it takes opposite values on them.
Therefore $f$ is a multiple of the orientation cocycle, or equivalently, of the Euler class.

\section{Quasimorphisms and exact bounded cohomology}\label{sec:qm vs bc}

In this section we give some details which may be helpful to the reader to have a better understanding of the relationship between de Rham quasimorphisms and de Rham classes in bounded cohomology.
Finally, we show how Theorem \ref{thm: invariant_quasi_2} and Corollary \ref{cor: invariant continuous quasimorphismsintro} are consequences of Theorem \ref{thm: invariant continuous subspacesintro}.

Let $\Omega^k(\Sigma)$ be the space of $k$-forms on $\Sigma$, $k \in \{1,2\}$.
We denote by $EH^2_b(\Gamma)$ the exact second bounded cohomology of $\Gamma$ in degree $2$, i.e., the kernel of the comparison map $\mathrm{comp}^2_\Gamma: H^2_b(\Gamma) \rightarrow H^2(\Gamma)$.
It is well known that $EH^2_b(\Gamma)$ is isomorphic to the space of homogeneous quasimorphisms $Q^h(\Gamma)$ on $\Gamma$ modulo homomorphisms \cite[Corollary 2.11]{frigerio2017bounded}:
\[
	EH^2_b(\Gamma)\cong Q^h(\Gamma)/\mathrm{Hom}(\Gamma, \R) .
\]
Moreover, since $\Sigma$ is (a model of) the classifying space of $\Gamma$, there is a natural isomorphism $H^2_b(\Sigma) \cong H^2_b(\Gamma)$.

We delve now in our specific framework. Fix a hyperbolic metric $m$ on $\Sigma$.
Recall from the introduction that there is a map $\Omega^1(\Sigma) \rightarrow Q^h(\Gamma)$, associating to each $1$-form $\alpha \in \Omega^1(\Sigma)$ the homogeneous quasimorphism $q^m_\alpha$ defined, for every $\gamma \in \Gamma$, as
\[q^m_\alpha(\gamma)= \int_{\rho^m_\gamma}\alpha , \]
where $\rho_\gamma^m$ is the closed geodesic in the free homotopy class of $\gamma$ according to the metric $m$.

The following diagram is commutative:
\begin{center}
	\begin{tikzcd}
		\Omega^1(\Sigma) \arrow[r] \arrow[dd, "d"] & Q^h(\Gamma) \arrow[d] \\
		& EH^2_b(\Gamma) \arrow[d,hook] \\
		\Omega^2(\Sigma)\arrow[r, "\Psi_m"] & H^2_b(\Gamma) ,
	\end{tikzcd}
\end{center}
where the left map is the exterior derivative, while $\Psi_m$ is defined in Subsection \ref{sub:derham clas}.
It follows that de Rham quasimorphisms in $Q^h(\Gamma)$ are sent to de Rham classes in $H^2_b(\Gamma)$.
Moreover, a de Rham quasimorphism is mapped to zero in $H^2_b(\Gamma)$ if and only if it is trivial (i.e., a homomorphism).

Recall that the Euler class in $H^2_b(\Gamma)$ is the image  under $\Psi_m$ of (a multiple of) the volume form associated to the metric $m$.
Since the volume form is closed but not exact, it follows that the Euler class does not lie in $\ker(\mathrm{comp}^2_\Gamma)= EH^2_b(\Gamma)$ (we refer the reader to \cite[Section 2.2]{Mar23} for more details).
On the other hand, every closed 2-form is the sum of an exact form and a multiple of the volume form.
This implies, using the commutativity of the diagram, that every de Rham class in $H^2_b(\Gamma)$ is the sum of a multiple of the Euler class and a class coming from a (de Rham) quasimorphism.

Since $\mcg(\Sigma)$ acts both on $Q^h(\Gamma)$ and $H^2_b(\Gamma)$, and the map $Q^h(\Gamma)\rightarrow H^2_b(\Gamma)$ is equivariant with respect to this action, we can now easily deduce Corollary \ref{cor: invariant continuous quasimorphismsintro} from Theorem \ref{thm: invariant continuous subspacesintro}.
If a finite-dimensional subspace $V\subseteq Q^h(\Gamma)$ generated by de Rham quasimorphisms is invariant under the action of $\aut(\Gamma)$, then its image $\overline{V}$ in $H^2_b(\Gamma)$ is also invariant. It follows from Theorem \ref{thm: invariant continuous subspacesintro} that $\overline{V}$ is either trivial or 1-dimensional, generated by the Euler class.
In conclusion, since the latter case is excluded, we deduce that $V$ consists of homomorphisms. 

In order to deduce Theorem \ref{thm: invariant_quasi_2}, we just need to make the following observations.
First of all, the space $\mathrm{Hom(\Gamma,\R)}$ is clearly $\aut(\Gamma)$-invariant.
Moreover, by Hurewicz theorem, we know that $\mathrm{Hom}(\Gamma,\R)\cong \R^{2g}$, where $g$ denotes the genus of $\Sigma$.
Therefore $V$ can be identified as a linear $\aut(\Gamma)$-invariant subspace of $\R^{2g}$ (and, hence, also $\mcg(\Sigma)$-invariant).
Since $\mcg(\Sigma)$ acts on $\R^{2g}$ by symplectic matrices and the representation $\mcg(\Sigma)\rightarrow \mathrm{Sp}(2g,\Z)$ is surjective \cite[Theorem 6.4]{farb2011primer}, Theorem \ref{thm: invariant_quasi_2} follows from the fact that the obvious action of $\mathrm{Sp}(2g,\Z)$ on $\R^{2g}$ is irreducible \cite[Theorem 2.8]{broaddus2011irreducible}, \cite[Proposition 3.2]{bor60}.

\bibliography{Biblio}

\newcommand{\etalchar}[1]{$^{#1}$}
\providecommand{\bysame}{\leavevmode\hbox to3em{\hrulefill}\thinspace}
\providecommand{\MR}{\relax\ifhmode\unskip\space\fi MR }
\providecommand{\MRhref}[2]{%
  \href{http://www.ams.org/mathscinet-getitem?mr=#1}{#2}
}
\providecommand{\href}[2]{#2}
\begin{thebibliography}{BFM{\etalchar{+}}22}

\bibitem[Ab{\'e}]{Abe10}
M.~Ab{\'e}rt, \emph{Some questions},
  \url{https://www.renyi.hu/~abert/questions.pdf}. Accessed: 2023-10-23.

\bibitem[BFM{\etalchar{+}}22]{BFMSS22}
L.~Battista, S.~Francaviglia, M.~Moraschini, F.~Sarti and A.~Savini,
  \emph{Bounded cohomology classes of exact forms}, 2022.
  \href{https://doi.org/10.48550/arXiv.2211.16125}{\path{doi:10.48550/arXiv.2211.16125}}

\bibitem[BFP11]{broaddus2011irreducible}
N.~Broaddus, B.~Farb and A.~Putman, \emph{Irreducible {S}p-representations and
  subgroup distortion in the mapping class group}, Comment. Math. Helv.
  \textbf{86} (2011), 537--556.
  \href{https://doi.org/10.4171/CMH/233}{\path{doi:10.4171/CMH/233}}

\bibitem[BG88]{bargeghys}
J.~Barge and {\'E}.~Ghys, \emph{Surfaces et cohomologie born{\'e}e}, Invent.
  Math. \textbf{92} (1988), 509--526.
  \href{https://doi.org/10.1007/BF01393745}{\path{doi:10.1007/BF01393745}}

\bibitem[BI02]{burger2002boundary}
M.~Burger and A.~Iozzi, \emph{Boundary maps in bounded cohomology}, Geom.
  Funct. Anal. \textbf{12} (2002), 281--292.
  \href{https://doi.org/10.1007/s00039-002-8246-8}{\path{doi:10.1007/s00039-002-8246-8}}

\bibitem[BM02]{burger2002continuous}
M.~Burger and N.~Monod, \emph{Continuous bounded cohomology and applications to
  rigidity theory}, Geom. Funct. Anal. \textbf{12} (2002), 219--280.
  \href{https://doi.org/10.1007/s00039-002-8245-9}{\path{doi:10.1007/s00039-002-8245-9}}

\bibitem[BM19]{BB19}
M.~Brandenbursky and M.~Marcinkowski, \emph{Aut-invariant norms and
  {A}ut-invariant quasimorphisms on free and surface groups}, Comment. Math.
  Helv. \textbf{94} (2019), 661--687.
  \href{https://doi.org/10.4171/CMH/470}{\path{doi:10.4171/CMH/470}}

\bibitem[Bor60]{bor60}
A.~Borel, \emph{Density properties for certain subgroups of semi-simple groups
  without compact components}, Ann. of Math. (2) \textbf{72} (1960), 179--188.
  \href{https://doi.org/10.2307/1970150}{\path{doi:10.2307/1970150}}

\bibitem[Bro81]{Bro81}
R.~Brooks, \emph{Some remarks on bounded cohomology}, in: ``Riemann Surfaces
  Related Topics'' (State University of New York, Stony Brook, NY, June 5--9,
  1978), I.~Kra and B.~Maskit (eds.), Annals of Mathematics Studies, vol.~97,
  Princeton University Press, Princeton, NJ, 1981, pp.~53--63.
  \href{https://doi.org/10.1515/9781400881550}{\path{doi:10.1515/9781400881550}}

\bibitem[Bro82]{browncohomology}
K.~S. Brown, ``Cohomology of groups'', Graduate Texts in Mathematics, vol.~87,
  Springer, New York, NY, 1982.
  \href{https://doi.org/10.1007/978-1-4684-9327-6}{\path{doi:10.1007/978-1-4684-9327-6}}

\bibitem[BV22]{MB22}
M.~Brandenbursky and M.~Verbitsky, \emph{Non-commutative {B}arge-{G}hys
  quasimorphisms}, 2022.
  \href{https://doi.org/10.48550/arXiv.2212.12958}{\path{doi:10.48550/arXiv.2212.12958}}

\bibitem[Cal09]{scl}
D.~Calegari, ``scl'', MSJ Memoirs, Mathematical Society of Japan, Tokyo, 2009.
  \href{https://doi.org/10.1142/e018}{\path{doi:10.1142/e018}}

\bibitem[FFW23]{FFW23}
F.~Fournier-Facio and R.~Wade, \emph{Aut-invariant quasimorphisms on groups},
  Trans. Amer. Math. Soc. \textbf{376} (2023), 7307--7327.
  \href{https://doi.org/10.1090/tran/8980}{\path{doi:10.1090/tran/8980}}

\bibitem[FM11]{farb2011primer}
B.~Farb and D.~Margalit, ``A primer on mapping class groups'', Princeton
  Mathematical Series, vol.~49, Princeton University Press, Princeton, NJ,
  2011.
  \href{https://doi.org/10.23943/princeton/9780691147949.001.0001}{\path{doi:10.23943/princeton/9780691147949.001.0001}}

\bibitem[Fri17]{frigerio2017bounded}
R.~Frigerio, ``Bounded Cohomology of Discrete Groups'', Mathematical Surveys
  and Monographs, vol. 227, American Mathematical Society, Providence, RI,
  2017. \href{https://doi.org/10.1090/surv/227}{\path{doi:10.1090/surv/227}}

\bibitem[Ghy01]{Ghy01}
{\'E}.~Ghys, \emph{Groups acting on the circle}, Enseign. Math. (2) \textbf{47}
  (2001), 329--407.
  \href{https://doi.org/10.5169/seals-65441}{\path{doi:10.5169/seals-65441}}

\bibitem[Gro82]{Gro82}
M.~Gromov, \emph{Volume and bounded cohomology}, Inst. Hautes {\'E}tudes Sci.
  Publ. Math. \textbf{56} (1982), 5--99.
  \href{http://www.numdam.org/item/?id=PMIHES_1982__56__5_0}{\path{numdam:PMIHES_1982__56__5_0}}

\bibitem[Has18]{Has18}
A.~Hase, \emph{Dynamics of $\mathrm{Out}(\mathrm{F}_n)$ on the second bounded
  cohomology of $\mathrm{F}_n$}, 2018.
  \href{https://doi.org/10.48550/arXiv.1805.00366}{\path{doi:10.48550/arXiv.1805.00366}}

\bibitem[Iva87]{Iva87}
N.~V. Ivanov, \emph{Foundations of the theory of bounded cohomology}, J. Soviet
  Math. \textbf{37} (1987), 1090--1115.
  \href{https://doi.org/10.1007/BF01086634}{\path{doi:10.1007/BF01086634}}

\bibitem[Joh72]{Joh72}
B.~Johnson, \emph{Cohomology in {B}anach algebras}, Mem. Amer. Math. Soc.
  \textbf{127} (1972), iii+96.
  \href{https://doi.org/10.1090/memo/0127}{\path{doi:10.1090/memo/0127}}

\bibitem[Mar22]{marasco2022trivial}
D.~Marasco, \emph{Trivial massey product in bounded cohomology}, 2022.
  \href{https://doi.org/10.48550/arXiv.2209.00560}{\path{doi:10.48550/arXiv.2209.00560}}

\bibitem[Mar23]{Mar23}
D.~Marasco, \emph{Cup product in bounded cohomology of negatively curved
  manifolds}, Proc. Amer. Math. Soc. \textbf{151} (2023), 2707--2715.
  \href{https://doi.org/10.1090/proc/16328}{\path{doi:10.1090/proc/16328}}

\bibitem[Min02]{Min02}
I.~Mineyev, \emph{Bounded cohomology characterizes hyperbolic groups}, Q. J.
  Math. \textbf{53} (2002), 59--73.
  \href{https://doi.org/10.1093/qjmath/53.1.59}{\path{doi:10.1093/qjmath/53.1.59}}

\bibitem[Min13]{minski2013brief}
Y.~Minsky, \emph{A brief introduction to mapping class groups}, in: ``Moduli
  spaces of Riemann Surfaces'' (Park City, UT, July 3--23, 2011), B.~Farb,
  R.~Hain and E.~Looijenga (eds.), IAS/Park City Mathematics Series, vol.~20,
  American Mathematical Society, Providence, RI, 2013, pp.~5--44.
  \href{https://doi.org/10.1090/pcms/020/02}{\path{doi:10.1090/pcms/020/02}}

\bibitem[Sou10]{Souto10}
J.~Souto, \emph{A remark on the action of the mapping class group on the unit
  tangent bundle}, Ann. Fac. Sci. Toulouse Math. (6) \textbf{19} (2010),
  589--601.
  \href{https://doi.org/10.5802/afst.1258}{\path{doi:10.5802/afst.1258}}

\bibitem[Thu79]{Thurston79}
W.~P. Thurston, \emph{The geometry and topology of 3-manifolds}, 1979.

\end{thebibliography}
\bibliographystyle{fram_alpha}

\end{document}